\newcommand{\pair}[1]{\langle #1 \rangle}
\newcommand{\wijs}[1]{\tau_{W(#1)}}
\newcommand{\U}{\mathcal{U}}
\newcommand{\cont}{\mathfrak{c}}
\newcommand{\B}{\mathcal{B}}
\newcommand{\A}{\mathcal{A}}
\newcommand{\Q}{\mathbb{Q}}
\newcommand{\I}{\mathcal{I}}
\newcommand{\cl}[2][X]{\mathrm{cl}_{#1}\!\left(#2\right)}
\newtheoremstyle{theorem}
     {11pt}
     {11pt}
     {}
     {}
     {\bfseries}
     {}
     {.5em}
     {\noindent\thmnumber{#2}. \thmname{#1}\thmnote{#3}}
\theoremstyle{theorem}
\newtheorem{lemma}{Lemma}[section]
\newtheorem{propo}[lemma]{Proposition}
\newtheorem{coro}[lemma]{Corollary}
\newtheorem{ex}[lemma]{Example}
\newtheorem{thm}[lemma]{Theorem}
\newtheorem{ques}[lemma]{Question}
\newenvironment{ex2}{\noindent\!\!}{\hfill\qed\vspace{1ex}}
\title{Wijsman hyperspaces of non-separable metric spaces}
\author[R. Hern\'andez-Guti\'errez]{Rodrigo Hern\'andez-Guti\'errez}
\email[R. Hern\'andez-Guti\'errez]{rodhdz@yorku.ca}
\author[P. Szeptycki]{Paul Szeptycki}
\email[P. Szeptycki]{szeptyck@yorku.ca}
\address{Department of Mathematics and Statistics, York University, Toronto, ON M3J 1P3, Canada}
\date{\today}
\subjclass[2010]{54B20, 54D15}
\keywords{hyperspace, Wisjman topology, normal space, isolated points}
\begin{document}
 
\begin{abstract}
Given a metric space $\pair{X,\rho}$, consider its hyperspace of closed sets $CL(X)$ with the Wijsman topology $\wijs{\rho}$. It is known that $\pair{CL(X),\wijs{\rho}}$ is metrizable if and only if $X$ is separable and it is an open question by Di Maio and Meccariello whether this is equivalent to $\pair{CL(X),\wijs{\rho}}$ being normal. In this paper we prove that if the weight of $X$ is a regular uncountable cardinal and $X$ is locally separable, then $\pair{CL(X),\wijs{\rho}}$ is not normal. We also solve some questions by Cao, Junnilla and Moors regarding isolated points in Wijsman hyperspaces.
\end{abstract}

\maketitle

\section{Introduction}

Given a metric space $\pair{X,\rho}$, consider the \emph{hyperspace} $CL(X)$ of all closed non-empty subsets of $X$ with the Wijsman topology $\wijs{\rho}$ (defined in the next section). Perhaps the most surprising fact about the Wijsman topology is that it depends not only on the topology of the base space $X$ but also on the specific metric $\rho$ used to generate it (see, for example, Corollary \ref{changes} below). This may be a reason why the structure of the Wijsman topology is much more intricate than the more known and widely studied Vietoris topology. 

A specific topological property we will consider is normality. The classification of normal Vietoris hyperspaces is now a classic result of Veli\v cko (\cite{velicko}) from 1975. However, the corresponding characterization for Wijsman hyperspaces is still an open question. It is known that the Wijsman hyperspace $\pair{CL(X),\wijs{\rho}}$ is metrizable if and only if $X$ is separable (see \cite[Theorem 2.1.5]{beer}). In 1998, Di Maio and Meccariello asked the following.

\begin{ques}\cite{dimaio-meccariello}\label{normality}
Let $\pair{X,\rho}$ be a metric space. Is it true that if $\pair{CL(X),\wijs{\rho}}$ is normal then $X$ is separable?
\end{ques}

Most of the work done so far points to the answer to this question being in the affirmative. Two of the most relevant results are the following.

\begin{thm}\cite[Theorem 1.2]{chaber-pol}\label{nlc}
Let $X$ be a metrizable space such that the set of points in $X$ with no compact neighborhood has weight $\kappa$. Then for any metric $\rho$ compatible with the topology of $X$, the space $\omega\sp\kappa$ embeds as a closed subspace of $\pair{CL(X),\wijs{\rho}}$. Thus, if $\kappa>\omega$, $\pair{CL(X),\wijs{\rho}}$ is not normal.
\end{thm}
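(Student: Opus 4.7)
The approach is to encode each $f \in \omega^\kappa$ as a discrete closed subset $\Phi(f) \subseteq X$ whose distances to a pre-chosen family of probe points recover the coordinates of $f$. Since the Wijsman topology is by definition the coarsest topology on $CL(X)$ making every distance functional $A \mapsto \rho(A,y)$ continuous, this plan will automatically yield a homeomorphic embedding as soon as the relevant distances are injectively controlled.

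\textbf{Step 1 (witness family).} Let $S \subseteq X$ be the set of points with no compact neighborhood. By transfinite recursion of length $\kappa$, using the weight hypothesis $w(S) = \kappa$ (so that fewer than $\kappa$ open balls cannot cover a base of $S$), I would choose witnesses $x_\alpha \in S$ and radii $r_\alpha > 0$ arranged so that the balls $\{B(x_\alpha, r_\alpha) : \alpha < \kappa\}$ form a discrete family in $X$. Because $x_\alpha$ has no compact neighborhood, a closed discrete infinite set $D_\alpha = \{z_\alpha^n : n < \omega\}$ lies inside $B(x_\alpha, r_\alpha/2)$; I would further insist that $n \mapsto \rho(z_\alpha^n, x_\alpha)$ is injective.

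\textbf{Step 2 (embedding).} Define $\Phi \colon \omega^\kappa \to CL(X)$ by $\Phi(f) = \{z_\alpha^{f(\alpha)} : \alpha < \kappa\}$. Discreteness of the balls $B(x_\alpha, r_\alpha)$ forces $\Phi(f)$ to be closed in $X$, and for each $\alpha$ the distance $\rho(\Phi(f), x_\alpha)$ is realized at $z_\alpha^{f(\alpha)}$ and depends injectively on $f(\alpha)$. Hence $\Phi$ is injective, continuous (each probing $\Phi(f) \mapsto \rho(\Phi(f), y)$ depends on at most finitely many coordinates of $f$ near any fixed $f$, by discreteness of the ball family), and open onto its image (since $f(\alpha)$ is recovered from $\rho(\Phi(f), x_\alpha)$ via a fixed injection).

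\textbf{Step 3 (closed image and non-normality).} Let $A$ lie in the Wijsman closure of $\Phi[\omega^\kappa]$. Evaluating the limit at $x_\alpha$ forces $\rho(A, x_\alpha) = \rho(z_\alpha^{g(\alpha)}, x_\alpha)$ for a unique $g(\alpha) < \omega$; the degenerate value $0$, which would mean $x_\alpha \in A$, is excluded by also probing at a point just outside $B(x_\alpha, r_\alpha)$ at which elements of $\Phi[\omega^\kappa]$ keep a uniform positive distance from $x_\alpha$. Further probes at points outside every $B(x_\alpha, r_\alpha)$ force $A \subseteq \bigcup_\alpha D_\alpha$, giving $A = \Phi(g)$. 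The non-normality clause follows because closed subspaces of normal spaces are normal while $\omega^\kappa$ for uncountable $\kappa$ is a classical non-normal space. The crux of the argument is Step 1: maintaining a discrete family of $\kappa$ many balls while arranging their centers to exhaust a weight-$\kappa$ piece of $S$ requires careful bookkeeping at limit stages, particularly when $\kappa$ is itself a limit cardinal; a secondary delicate point is ensuring in Step 3 that the probing points truly detect all coordinates of the putative limit simultaneously.
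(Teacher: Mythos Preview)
First, note that the paper does not prove this theorem at all: it is quoted from Chaber--Pol \cite{chaber-pol} as background, so there is no proof in the paper to compare against. What follows is therefore an assessment of your sketch on its own merits.

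Your Step 2 contains a genuine gap. You claim that $\Phi$ is continuous because ``each probing $\Phi(f)\mapsto\rho(\Phi(f),y)$ depends on at most finitely many coordinates of $f$ \ldots\ by discreteness of the ball family.'' Discreteness of $\{B_\rho(x_\alpha,r_\alpha):\alpha<\kappa\}$ only guarantees that every point has a neighborhood meeting at most one member; it does \emph{not} guarantee that a closed ball $D_\rho(y,p)$ of arbitrary radius meets only finitely many of them, and that is exactly what you need for $\Phi^{-1}(\U^+(y,p))$ to be open. Concretely, take $X=\ell^2(\kappa)$ (so $S=X$ has weight $\kappa$), let $x_\alpha=e_\alpha$ be the standard unit vectors, $r_\alpha=1/2$, and $z_\alpha^n=(1-\tfrac{1}{n+10})e_\alpha$. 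The family $\{B_\rho(e_\alpha,1/2)\}$ \emph{is} discrete, yet for $y=0$ one computes $\rho(\Phi(f),0)=1-\tfrac{1}{\min_\alpha f(\alpha)+10}$, and hence for $p=1-\tfrac{2}{21}$ the preimage $\Phi^{-1}(\U^+(0,p))$ equals $\{f\in\omega^\kappa:f(\alpha)\geq 1\textrm{ for all }\alpha\}$, which is not open in the product topology. So with only the hypotheses you have imposed, $\Phi$ need not be continuous; some further control on how the sets $D_\alpha$ sit relative to distant probe points---or a different encoding altogether---is required, and this is where the real work of the Chaber--Pol argument lies.

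There are also smaller issues. Injectivity of $n\mapsto\rho(z_\alpha^n,x_\alpha)$ is not enough for the coordinate-recovery map in your openness argument to be continuous; you need the range of this map to be discrete in $\mathbb{R}$. In Step 3, continuity of $A\mapsto\rho(A,x_\alpha)$ only forces $\rho(A,x_\alpha)$ into the \emph{closure} of $\{\rho(z_\alpha^n,x_\alpha):n<\omega\}$, so accumulation values other than $0$ must also be excluded before you can define $g(\alpha)$. These last points can be repaired by choosing the $z_\alpha^n$ more carefully, but the continuity failure above is structural and forces a rethinking of Step~1.
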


\begin{thm}\cite[Theorem 3.1]{cao-jun-hernorm}\label{herednorm}
Let $\pair{X,\rho}$ be a non-separable metric space. Then $CL(X)\setminus\{X\}$, given the subspace topology of $\wijs{\rho}$, has a closed copy of the Dieudonn\'e plank $\omega_1\times(\omega_1+1)$. Thus, $\pair{CL(X),\wijs{\rho}}$ is not hereditarily normal.
\end{thm}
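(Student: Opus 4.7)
The plan is to construct a closed copy of the Dieudonn\'e plank inside $CL(X)\setminus\{X\}$ and then invoke the classical non-normality of the plank. The groundwork is supplied by non-separability of $X$: by a standard argument (if every maximal $(1/n)$-separated subset of $X$ were countable, the countable union of such sets would be a countable dense set), there is some $\varepsilon>0$ and an $\varepsilon$-separated family of cardinality $\omega_1$ in $X$. Split it into two cofinal $\omega_1$-indexed pieces $\{p_\alpha:\alpha<\omega_1\}$ and $\{q_\alpha:\alpha<\omega_1\}$. The crucial features are that every subset of this uniformly discrete family is closed in $X$, and that for $y$ in the family the function $A\mapsto\rho(y,A)$ takes the value $0$ if $y\in A$ and is otherwise at least $\varepsilon$ on subsets of the family.

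Next, I would define, for each point $(\alpha,\beta)$ of the plank $\omega_1\times(\omega_1+1)$, a closed set $F_{\alpha,\beta}\in CL(X)\setminus\{X\}$. A workable recipe is to set $F_{\alpha,\beta}=\{p_\gamma:\gamma\leq\beta\}\cup\{q_\alpha\}$ for the isolated plank-points, and at the boundary take the expected limits, e.g.\ $F_{\alpha,\omega_1}=\{p_\gamma:\gamma<\omega_1\}\cup\{q_\alpha\}$. Each $F_{\alpha,\beta}$ is closed as a subset of the uniformly discrete family, and distinct from $X$ since it omits $q_\delta$ for all $\delta\neq\alpha$. The assignment $(\alpha,\beta)\mapsto F_{\alpha,\beta}$ is injective because the unique $q_\alpha$ inside $F_{\alpha,\beta}$ pins down the first coordinate and the maximal $\gamma$ with $p_\gamma\in F_{\alpha,\beta}$ pins down the second. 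Continuity follows because a subbasic Wijsman neighborhood depends on only finitely many test points, each seeing a distance that varies discretely in our combinatorial indexing, so a matching plank-neighborhood can be read off directly.

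The main obstacle will be verifying that the image is Wijsman-closed in $CL(X)\setminus\{X\}$: a Wijsman-limit of $F_{\alpha,\beta}$'s must preserve the pattern of zero/positive distances to each $p_\gamma$ and $q_\delta$, and one has to argue that any such limit is again of the indexed form or else equals $X$ (which is excluded). The argument splits according to which direction a convergent net approaches — toward an isolated plank-point, along a cofinal column, or along a row at the plank's boundary — and uses discreteness of the family to pin down the limit. Once the closed embedding is established, the classical non-normality of the Dieudonn\'e plank transfers to $CL(X)\setminus\{X\}$, and since this is an open subspace of $\pair{CL(X),\wijs{\rho}}$, the hyperspace fails to be hereditarily normal.
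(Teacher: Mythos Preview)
The paper does not prove this theorem; it is quoted from Cao and Junnila \cite{cao-jun-hernorm} as background motivation for Question~\ref{normality}, and no argument for it appears anywhere in the manuscript. There is therefore no proof here to compare your proposal against.

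Independently of that, your sketch has a real gap. With the recipe $F_{\alpha,\beta}=\{p_\gamma:\gamma\leq\beta\}\cup\{q_\alpha\}$ the map is \emph{not} Wijsman-continuous at limit ordinals $\lambda$ in the second coordinate: testing with $x=p_\lambda$ gives $\rho(p_\lambda,F_{\alpha,\beta})\geq\varepsilon$ for every $\beta<\lambda$ while $\rho(p_\lambda,F_{\alpha,\lambda})=0$, so $F_{\alpha,\beta}\not\to F_{\alpha,\lambda}$. Switching to $\gamma<\beta$ repairs this (the sets then increase with $\beta$ and each distance functional decreases to the correct limit), but you must then redo injectivity and the boundary row. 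The closedness argument is also incomplete: the zero/positive pattern of $\rho(p_\gamma,\cdot)$ and $\rho(q_\delta,\cdot)$ tells you which members of the discrete family a limit set contains, but it does not rule out the limit containing \emph{other} points of $X$. A Wijsman-accumulation point is governed by $\rho(x,\cdot)$ for every $x\in X$, and nothing in your outline prevents a limit set from picking up points outside $\{p_\gamma,q_\delta\}$; you need an additional argument (or a restriction on which nets can converge) to force any limit back into the image.
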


The major contribution of this paper is to approach the solution of Question \ref{normality} from a different angle. Our main result is the following.

\begin{thm}\label{main}
Let $\kappa$ be a regular uncountable cardinal. If $\pair{X,\rho}$ is a locally separable metric space of weight $\kappa$, then $\pair{CL(X),\wijs{\rho}}$ is not normal.
\end{thm}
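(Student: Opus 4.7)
The plan is to find, inside $\pair{CL(X),\wijs{\rho}}$, a closed subspace that fails to be normal; since closed subspaces of normal spaces are normal, this is enough. Local separability together with the weight hypothesis yields a decomposition $X = \bigsqcup_{\alpha<\kappa} X_\alpha$ into clopen, nonempty, separable pieces (take a maximal pairwise disjoint family of clopen separable subsets of $X$; by local separability it covers $X$, and by the weight hypothesis it has cardinality $\kappa$). For each $\alpha$ I fix a representative $p_\alpha \in X_\alpha$ together with a radius $\epsilon_\alpha > 0$ such that the open $\rho$-ball of radius $2\epsilon_\alpha$ around $p_\alpha$ is contained in $X_\alpha$, and I fix a countable dense subset $D_\alpha \subseteq X_\alpha$.

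I would partition $\kappa$ into two disjoint stationary subsets $S_0, S_1$, and for $i = 0,1$ consider the family $F_i = \{\{p_\alpha\} : \alpha \in S_i\} \subseteq CL(X)$. The goal is to verify that $F_0$ and $F_1$ are disjoint closed subsets of $\pair{CL(X),\wijs{\rho}}$ that cannot be separated by disjoint open sets; closedness may require replacing the singletons $\{p_\alpha\}$ by small, carefully chosen closed enlargements inside the ball $B(p_\alpha,\epsilon_\alpha)$, to rule out spurious Wijsman limits of nets $(\{p_{\alpha_j}\})_j$ with the $\alpha_j \in S_i$ distinct. The heart of the argument is the non-separation claim, which is where the theorem diverges in spirit from the approaches of Chaber--Pol and Cao--Junnilla.

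Assume toward contradiction that disjoint open $U_0, U_1 \subseteq CL(X)$ exist with $F_i \subseteq U_i$. For each $\alpha \in S_i$ pick a basic Wijsman neighborhood $V_\alpha^i \subseteq U_i$ of $\{p_\alpha\}$, determined by a finite test set $T_\alpha^i \subseteq X$ and tolerances. Replacing each test point by a close enough element of some $D_\beta$ (using stability of Wijsman basic neighborhoods under small perturbations of test points), one may assume each test point lies in $\bigcup_\beta D_\beta$. A pressing-down argument on $S_i$ then yields a stationary $S_i^\ast \subseteq S_i$ on which the portion of $T_\alpha^i$ supported in pieces $X_\beta$ with $\beta < \alpha$ is a fixed finite configuration. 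Finally, picking $\alpha \in S_0^\ast$ and then $\beta \in S_1^\ast$ with $\alpha < \beta$ large enough that the finitely many ``above'' supports of $V_\alpha^0$ and $V_\beta^1$ miss $\beta$ and $\alpha$ respectively, one constructs an explicit closed set $A$ (essentially $\{p_\alpha,p_\beta\}$, perhaps slightly augmented) that satisfies both bundles of Wijsman constraints, placing $A \in V_\alpha^0 \cap V_\beta^1 \subseteq U_0 \cap U_1$, a contradiction.

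The main obstacle is the arbitrariness of $\rho$: without control on distances across pieces, a single test point can be close to many pieces simultaneously, and the closedness of $F_i$ is itself delicate. The technical crux is exploiting the separability of each piece to reduce the relevant Wijsman data to countable combinatorics compatible with pressing-down, and the regularity of $\kappa$ is essential for producing the stationary refinements $S_i^\ast$.
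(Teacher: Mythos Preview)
Your plan has a genuine gap in the non-separation step, and in fact the families $F_0,F_1$ you propose can be separated by disjoint open sets for suitable metrics. Take $X=\kappa$ with the discrete metric $\rho(\alpha,\beta)=1$ when $\alpha\ne\beta$ lie in the same $S_i$ and $\rho(\alpha,\beta)=2$ otherwise. Each piece in the clopen decomposition is the singleton $\{\alpha\}$ and $p_\alpha=\alpha$. Fixing any $x_0\in S_0$, the disjoint open sets $\U\sp{-}(x_0,3/2)$ and $\U\sp{+}(x_0,3/2)$ already separate $F_0$ from $F_1$; the enlargements you allude to cannot help, since here the balls $B(p_\alpha,\epsilon_\alpha)$ are the singletons themselves. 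More generally, the obstruction is the lower-bound half of the Wijsman constraints: a basic neighbourhood of $\{p_\alpha\}$ may demand $\rho(A,x)>a$ for a root test point $x$, and while pressing down tells you $\rho(p_\alpha,x)>a$ for all $\alpha\in S_0^\ast$, it gives you no information whatsoever about $\rho(p_\beta,x)$ for $\beta\in S_1^\ast$. Adding points to $\{p_\alpha,p_\beta\}$ only makes such constraints harder to meet. Your pressing-down step stabilises the \emph{location} of the test points but does nothing to reconcile the distance data across the two stationary sets, and your final paragraph correctly identifies exactly this obstacle without resolving it.

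The paper's argument takes a different route that is designed precisely to neutralise this obstacle. Instead of singletons it uses two decreasing $\kappa$-chains of tail unions, $B_\beta=\bigcup_{\alpha\ge\beta}K_\alpha$ and $A_\beta=A\cap B_\beta$ for a carefully constructed clopen $A\subset X$; an elementary lemma shows such chains are closed in $\pair{CL(X),\wijs{\rho}}$, so the closedness issue you flag disappears. The missing technical idea is a Sorgenfrey-line lemma: for each test point $x\in D$ there is a real $r(x)>0$ with $|\{\alpha:\rho(x,K_\alpha)<r(x)\}|<\kappa$ while $|\{\alpha:\rho(x,K_\alpha)\in[r(x),r(x)+\epsilon)\}|=\kappa$ for every $\epsilon>0$. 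One then builds $A$ so that, for every $x\in D$, cofinally many pieces $K_\alpha\subset A$ have $\rho(x,K_\alpha)$ arbitrarily close to $r(x)$; the upshot is that $\rho(x,A_\beta)=\rho(x,B_\beta)=r(x)$ for all sufficiently large $\beta$. It is this eventual \emph{equality}---the two chains become indistinguishable from the viewpoint of any fixed test point---that makes a subsequent pressing-down and $\Delta$-system argument (in the spirit of yours) actually produce a finite $F$ lying in both basic neighbourhoods. Without a device of this kind, the arbitrariness of cross-piece distances is fatal, as the two-valued example above shows.
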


Notice that locally compact metrizable spaces are locally separable. Thus, in some way, we are solving Question \ref{normality} in a case that is exactly the opposite of the case considered in Theorem \ref{nlc}. Unfortunately, the proof of Theorem \ref{main} is not as direct as embedding a well-known non-normal space as a closed set. We were able to obtain an embedding theorem that, nevertheless, is not strong enough to assure non-normality; see Theorem \ref{embedding} below.

In \cite{cao-jun-moors}, Cao, Junnila and Moors ask some questions regarding isolated points in Wijsman hyperspaces. In the last section of this paper, we answer them.

\section{Preliminaries}

For the background on general topology see \cite{eng}; for the set set-theoretic background see \cite{kunen-set-theory-2011}. Let $\pair{X,\rho}$ be a metric space. For $x\in X$ and $\epsilon\in(0,\infty)$, we define the open and closed balls
\begin{eqnarray}
B_\rho(x,\epsilon) & = & \{y\in X:\rho(x,y)<\epsilon\}, \nonumber\\
D_\rho(x,\epsilon) & = & \{y\in X:\rho(x,y)\leq\epsilon\}. \nonumber
\end{eqnarray} 
The metric $\rho$ is an ultrametric if $\rho(x,y)\leq\max{\{\rho(x,z),\rho(z,y)\}}$ for all $x,y,z\in X$. In particular, this means that every triangle in $X$ is isosceles with each of its two equal sides greater than the remaining one. The following is well-known and easy to prove.

\begin{lemma}\label{ultralemma}
Let $\pair{X,\rho}$ be an ultrametric space and let $\B=\{B_\rho(x,\epsilon):x\in X,\epsilon\in(0,\infty)\}$. Then
\begin{itemize}
\item[(a)] $\B$ is a base of clopen subsets of $X$, and
\item[(b)] if $B_0,B_1\in\B$ and $B_0\cap B_1\neq\emptyset$, then $B_i\subset B_{1-i}$ for some $i\in\{0,1\}$.
\end{itemize}
\end{lemma}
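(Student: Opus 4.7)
The plan is to derive both parts from the single observation that in an ultrametric space every triangle is isosceles with its two equal sides at least as long as the third---a direct consequence of the defining inequality $\rho(x,y)\leq\max\{\rho(x,z),\rho(z,y)\}$.

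For (a), the family $\B$ is by definition the standard open-ball basis of the metric topology, so only the clopenness of each individual ball $B_\rho(x,\epsilon)$ requires argument. I would prove that its complement is open by showing that for any $y$ with $\rho(x,y)\geq\epsilon$, the ball $B_\rho(y,\epsilon)$ is disjoint from $B_\rho(x,\epsilon)$. A single application of the ultrametric inequality to a hypothetical common point $z$ gives $\rho(x,y)\leq\max\{\rho(x,z),\rho(z,y)\}<\epsilon$, an immediate contradiction.

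For (b), given $B_0=B_\rho(x_0,\epsilon_0)$ and $B_1=B_\rho(x_1,\epsilon_1)$ that share a point $z$, I would order the radii, say $\epsilon_0\leq\epsilon_1$, and show $B_0\subset B_1$. The trick is to route any $y\in B_0$ through the basepoints rather than compare it to $x_1$ directly: the ultrametric inequality first in the triangle $\{y,x_0,z\}$ bounds $\rho(y,z)$ by $\epsilon_0$, and then in the triangle $\{y,z,x_1\}$ bounds $\rho(y,x_1)$ by $\max\{\epsilon_0,\epsilon_1\}=\epsilon_1$, so $y\in B_1$.

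I do not anticipate any real obstacle; both parts of the lemma are straightforward manipulations of the ultrametric inequality, which is presumably why the authors record it as well-known and easy to prove. The only delicate point is recognising, for (b), that one should funnel the estimate through a shared intersection point together with the basepoints of the two balls, rather than attempting to compare two arbitrary elements directly.
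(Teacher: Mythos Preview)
Your argument is correct and complete. The paper itself does not supply a proof of this lemma, merely recording it as ``well-known and easy to prove,'' so there is nothing to compare against; your direct applications of the ultrametric inequality are exactly the standard verification.
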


The Wijsman topology $\wijs{\rho}$ on $CL(X)=\{A\subset X :A\textrm{ is closed and non-empty}\}$ defined by the metric $\rho$ is the smallest topology such that the family of functionals $\{\rho(\_\ ,p):p\in X\}$ is continuous, where $\rho(A,p)=\inf\{\rho(x,p):x\in A\}$ for each $p\in X$ and $A\in CL(X)$. Given $x\in X$ and $\epsilon>0$, let
\begin{eqnarray}
\U\sp+(x,\epsilon)& = &\{A\in CL(X):\rho(A,x)>\epsilon\},\textrm{ and}\nonumber\\
\U\sp-(x,\epsilon)& = &\{A\in CL(X):\rho(A,x)<\epsilon\}.\nonumber
\end{eqnarray}
The collection $\{\U\sp+(x,\epsilon):x\in X,\epsilon>0\}\cup\{\U\sp-(x,\epsilon):x\in X,\epsilon>0\}$ is taken as the canonical subbase for $\wijs{\rho}$. Another subbase consists of all sets of the form
$$
\U(x,\delta,\epsilon)=\{A\in CL(X):\delta<\rho(A,x)<\epsilon\}
$$
such that $x\in X$ and $\delta<\epsilon$. We will need the following observation.

\begin{lemma}\label{basedense}
Let $\pair{X,\rho}$ be a metric space. If $D\subset X$ is a dense set and $\Q$ is the set of rational numbers, then $\{\U(x,q,r):x\in D,q,r\in\Q,q<r\}$ is a base for $\pair{CL(X),\wijs{\rho}}$.
\end{lemma}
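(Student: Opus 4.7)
My approach is to reduce to the standard subbase: since $\{\U^+(y,\epsilon),\U^-(y,\epsilon):y\in X,\epsilon>0\}$ generates $\wijs{\rho}$, it suffices to show that for every $A\in CL(X)$ and every subbasic neighborhood $W$ of $A$ of the form $\U^\pm(y,\epsilon)$, there exist $x\in D$ and rationals $q<r$ with $A\in\U(x,q,r)\subset W$. Taking finite intersections then produces, for every basic open neighborhood of $A$, a finite intersection of sets $\U(x_i,q_i,r_i)$ of the claimed form inside $W$ that still contains $A$, which is exactly what is needed for the family to generate the Wijsman topology in the required way.

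For $W=\U^-(y,\epsilon)$ I would set $d=\rho(A,y)<\epsilon$, fix $\delta>0$ with $d+2\delta<\epsilon$, and use density of $D$ to pick $x\in D$ with $\rho(x,y)<\delta$. The triangle inequality $|\rho(A,x)-\rho(A,y)|\le\rho(x,y)$ places $\rho(A,x)$ in $(d-\delta,d+\delta)$, so I can choose $q\in\Q$ with $q<d-\delta$ and $r\in\Q\cap(d+\delta,\epsilon-\delta)$ (nonempty since $\epsilon-\delta>d+\delta$) to secure $A\in\U(x,q,r)$. Any $B\in\U(x,q,r)$ then satisfies $\rho(B,y)\le\rho(B,x)+\rho(x,y)<r+\delta<\epsilon$, so $B\in W$. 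The case $W=\U^+(y,\epsilon)$ is symmetric: take $d=\rho(A,y)>\epsilon$, choose $\delta>0$ with $\epsilon+2\delta<d$, pick $x\in D$ with $\rho(x,y)<\delta$, then a rational $q\in(\epsilon+\delta,d-\delta)$ and any rational $r>d+\delta$; the same triangle inequalities yield $A\in\U(x,q,r)$ and $\rho(B,y)\ge\rho(B,x)-\rho(x,y)>q-\delta>\epsilon$ for $B\in\U(x,q,r)$, so $B\in W$.

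There is no genuine obstacle here: the lemma is a routine combination of the triangle inequality with the density of $D$ in $X$ and of $\Q$ in the real line. The only point deserving slight care is to shrink $\delta$ enough at the outset so that the rational intervals from which $q$ and $r$ are drawn are nonempty, and this is automatic once $d+2\delta<\epsilon$ (respectively $\epsilon+2\delta<d$) is arranged.
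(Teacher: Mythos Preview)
Your argument is correct: the triangle inequality $|\rho(B,x)-\rho(B,y)|\le\rho(x,y)$ together with the density of $D$ in $X$ and of $\Q$ in $\mathbb{R}$ is exactly what is needed, and you have handled the bookkeeping for $\delta$ carefully so that the relevant rational intervals are nonempty. The paper states this lemma as a routine observation and gives no proof, so there is nothing to compare; your approach is the natural one and matches how the result is actually used later (namely, as a \emph{subbase}: in the proof of Theorem~\ref{main} basic neighborhoods are taken to be finite intersections $\bigcap\{\U(x,p,q):x\in S\}$ with $x\in D$ and rational endpoints, which is precisely what your argument yields).
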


Notice that from Lemma \ref{basedense} we can infer that the weight of $\pair{CL(X),\wijs{\rho}}$ is less or equal to the density (equivalently, weight) of $X$.

\section{Normality of hyperspaces}

In this section we will give the proof of Theorem \ref{main}. First we need a result that allows us to partition locally separable metrizable spaces into clopen separable pieces. It easily follows from the proof of \cite[5.1.27]{eng} but we give the proof for the sake of completeness.

\begin{lemma}\label{decomposition}
If $X$ is a locally separable metrizable space of weight $\kappa>\omega$, then there is a clopen partition $X=\bigcup\{K_\alpha:\alpha<\kappa\}$ where $K_\alpha$ is a separable space for all $\alpha<\kappa$.
\end{lemma}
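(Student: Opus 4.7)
The plan is to realize $X$ as the disjoint topological sum of the connected components of the \emph{nerve} of a locally finite separable open cover. First, use local separability to pick for each $x\in X$ a separable open neighborhood $V_x$. Since every metrizable space is paracompact, the cover $\{V_x:x\in X\}$ admits a locally finite open refinement $\U=\{U_\sigma:\sigma\in\Sigma\}$, and each $U_\sigma$ inherits separability from any containing $V_x$.

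Next, build the intersection graph on $\Sigma$, with an edge between $\sigma$ and $\tau$ whenever $U_\sigma\cap U_\tau\neq\emptyset$. The key claim is that every vertex of this graph has countable degree: fix $\sigma$ and a countable dense set $D\subseteq U_\sigma$; by local finiteness, each $d\in D$ has a neighborhood meeting only finitely many elements of $\U$, and every $U_\tau$ intersecting $U_\sigma$ must hit $D$, since $U_\tau\cap U_\sigma$ is a nonempty open subset of $U_\sigma$. Hence the neighbors of $\sigma$ are covered by countably many finite sets. Consequently each connected component $C\subseteq\Sigma$ of the nerve is countable.

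For each component $C$, set $K_C=\bigcup\{U_\sigma:\sigma\in C\}$. Then $K_C$ is open and a countable union of separable sets, hence separable. Distinct $K_C$'s must be disjoint, since a common point would place some $U_\sigma\in C$ and $U_\tau\in C'$ into the same component; so each $K_C$ is also closed, being the complement of the union of the remaining $K_{C'}$'s. Thus $\{K_C\}$ is a clopen partition of $X$ into separable pieces. To count the pieces, note that a clopen partition of $X$ into $\lambda$ nonempty separable pieces has weight $\max(\lambda,\omega)$; since this must equal $\kappa>\omega$, we get $\lambda=\kappa$, and a reindexing produces the desired enumeration $\{K_\alpha:\alpha<\kappa\}$.

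The only delicate point is the countable-degree claim, which uses local finiteness and separability of the refined cover simultaneously; once this is in place, the rest is bookkeeping, and no additional hypothesis on $X$ (in particular, no connectedness assumption) is needed.
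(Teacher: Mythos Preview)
Your proof is correct and follows essentially the same approach as the paper's: both take a locally finite separable open refinement via paracompactness, group its members by chain-connectedness (you phrase this as connected components of the nerve, the paper defines the equivalence directly on points), and use separability plus local finiteness to bound each vertex's degree countably (you argue via a countable dense set, the paper via Lindel\"ofness, which amounts to the same thing in metric spaces). The counting of pieces and the conclusion are identical.
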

\begin{proof}
By paracompactness of $X$, there is a locally finite open cover $\U$ of $X$ consisting of separable open subsets. For each $x\in X$, define $U(x)$ as the set of all points $y\in X$ such that there exist $U_0,\ldots,U_m\subset\U$ such that $x\in U_0$, $y\in U_m$ and $U_i\cap U_{i+1}\neq\emptyset$ every time $i<m$. Clearly, $\{U(x):x\in X\}$ is a partition of $X$ into clopen pieces.

Since every separable metric space is Lindel\"of, by the fact that $\U$ is locally finite it easily follows that for all $U\in\U$ the set $\{V\in\U:U\cap V\neq\emptyset\}$ is countable. Given $x\in X$, if we recursively define $\U(x,0)=\{U\in\U:x\in U\}$ and $\U(x,i+1)=\{U\in\U:\exists V\in\U(x,i)\ (U\cap V\neq\emptyset)\}$ for $i<\omega$, then it follows that $|\U(x,i)|\leq\omega$ for all $i<\omega$. Also, $U(x)=\bigcup\{\bigcup\U(x,i):i<\omega\}$ so this set is a countable union of separable open subspaces, thus it is separable.

Notice that the fact that the space $X$ has weight $\kappa$ implies that $|\{U(x):x\in X\}|=\kappa$. Then let $\{K_\alpha:\alpha<\kappa\}$ be an enumeration of $\{U(x):x\in X\}$.
\end{proof}

The closed sets we will use to prove non-normality of the Wijsman hyperspace will be chains of decreasing clopen subsets of the base space. Let us first give some properties of this type of sets.

\begin{lemma}\label{decreasing}
Let $\pair{X,\rho}$ be a metric space. Assume that there exists a limit ordinal $\kappa$ and a set $\A=\{A_\alpha:\alpha<\kappa\}\subset CL(X)$ such that
\begin{itemize}
\item[(i)] $A_\beta\subsetneq A_\alpha$ whenever $\alpha<\beta<\kappa$, 
\item[(ii)] if $\gamma<\kappa$ is a limit ordinal, then $A_\gamma=\bigcap\{A_\alpha:\alpha<\gamma\}$, and
\item[(iii)] $\bigcap\A=\emptyset$.
\end{itemize}
Then $\A$ is a closed subset of $\pair{CL(X),\wijs{\rho}}$ and its subspace topology is finer than the order topology induced by the enumeration.
\end{lemma}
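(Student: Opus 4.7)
The plan rests on a single monotonicity observation: since $A_\beta\subseteq A_\alpha$ whenever $\alpha\le\beta$, the number $\rho(A_\alpha,p)$ is non-decreasing in $\alpha$ for every $p\in X$. More usefully, whenever $\alpha+1<\kappa$, any $p\in A_\alpha\setminus A_{\alpha+1}$ (available by (i)) satisfies $\rho(A_\beta,p)=0$ for $\beta\le\alpha$ and $\rho(A_\beta,p)\ge\rho(A_{\alpha+1},p)>0$ for $\beta\ge\alpha+1$, so such a ``witness point'' cleanly splits the initial segment of the chain from its tail via the subbasic sets $\U\sp-$ and $\U\sp+$. Every argument below reduces to applying this trick at one or two strategic indices.

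For closedness, I would take $B\in CL(X)\setminus\A$ and set $\alpha_B=\min\{\alpha<\kappa:B\not\subseteq A_\alpha\}$; this minimum exists because $B\ne\emptyset$ and $\bigcap\A=\emptyset$, and it cannot be a limit ordinal since (ii) together with $B\subseteq A_\beta$ for all $\beta<\alpha_B$ would give $B\subseteq A_{\alpha_B}$. If $\alpha_B=0$, choose $z\in B\setminus A_0$: then $\rho(A_\alpha,z)\ge\rho(A_0,z)>0=\rho(B,z)$ for every $\alpha$, so $\U\sp-(z,\rho(A_0,z)/2)$ is a Wijsman neighborhood of $B$ disjoint from $\A$. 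If $\alpha_B=\gamma+1$, then $B\subsetneq A_\gamma$ (strict since $B\notin\A$) and $B\not\subseteq A_{\gamma+1}$; picking $x\in A_\gamma\setminus B$ and $y\in B\setminus A_{\gamma+1}$, the set $\U\sp+(x,\rho(B,x)/2)\cap\U\sp-(y,\rho(A_{\gamma+1},y)/2)$ contains $B$, excludes every initial $A_\alpha$ (with $\alpha\le\gamma$, since $x\in A_\alpha$) through the $\U\sp+$ factor, and excludes every tail $A_\alpha$ (with $\alpha\ge\gamma+1$, since $A_\alpha\subseteq A_{\gamma+1}$) through the $\U\sp-$ factor.

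For the topology comparison, I will refine a local base at each $A_\gamma$ in the order topology; note that $\kappa$ being a limit ordinal guarantees $\gamma+1<\kappa$ throughout, so witness points are available at every stage. When $\gamma=0$, a witness $y\in A_0\setminus A_1$ gives $\U\sp-(y,\rho(A_1,y)/2)\cap\A=\{A_0\}$. When $\gamma=\delta+1$ is a successor, witnesses $x\in A_\delta\setminus A_\gamma$ and $y\in A_\gamma\setminus A_{\gamma+1}$ yield $\U\sp+(x,\rho(A_\gamma,x)/2)\cap\U\sp-(y,\rho(A_{\gamma+1},y)/2)\cap\A=\{A_\gamma\}$, which is the smallest order-neighborhood of the isolated point $A_\gamma$. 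When $\gamma$ is a limit, a basic order-neighborhood of $A_\gamma$ has the form $\{A_\alpha:\beta<\alpha\le\gamma\}$ for some $\beta<\gamma$, and choosing $x\in A_\beta\setminus A_{\beta+1}$ together with $y\in A_\gamma\setminus A_{\gamma+1}$ makes $\U\sp+(x,\rho(A_{\beta+1},x)/2)\cap\U\sp-(y,\rho(A_{\gamma+1},y)/2)$ a Wijsman open set whose trace on $\A$ is exactly this interval.

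No part of this presents a genuine obstacle; the decreasing, properly nested structure of $\A$ makes the distance functionals $\rho(\_,p)$ monotone in $\alpha$, and every separation reduces to cutting the index-line at the right places with one or two subbasic factors. The only care required is to exclude the limit case in the definition of $\alpha_B$ (the one place condition (ii) is essential) and to check the first-element, successor, and limit subcases of the topology comparison; in each the witness-point choices are dictated by the strict inclusions in (i).
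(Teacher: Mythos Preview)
Your proof is correct and follows essentially the same approach as the paper: pick witness points in $A_\beta\setminus A_{\beta+1}$ and use the subbasic sets $\U^\pm$ to carve off initial and final pieces of the chain. The only cosmetic differences are that the paper assumes $A_0=X$ without loss of generality (so your $\alpha_B=0$ case does not arise) and, for the topology comparison, shows directly that every order-subbasic initial and final segment is Wijsman-open rather than building a local base at each $A_\gamma$.
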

\begin{proof}
In order to simplify the proof, let us assume that $A_0=X$, clearly we will not lose generality from this assumption.

First to show that $\A$ is closed, let $B\in CL(X)\setminus\A$. Notice that this implies that there is $\gamma<\kappa$ such that $\gamma+1=\min\{\alpha<\kappa:B\not\subset A_\alpha\}$. Let $x\in B\setminus A_{\gamma+1}$, then there exists $\epsilon_0>0$ such that $B_\rho(x,\epsilon_0)\cap A_{\gamma+1}=\emptyset$. Since $B\neq A_\gamma$, there is a point $y\in A_\gamma\setminus B$ so let $\epsilon_1>0$ be such that $D_\rho(y,\epsilon_1)\cap B=\emptyset$. Then $B\in\U\sp{-}(x,\epsilon_0)\cap \U\sp{+}(y,\epsilon_1)$ and $\U\sp{-}(x,\epsilon_0)\cap\U\sp{+}(y,\epsilon_1)\cap\A=\emptyset$. 

Let $\beta<\kappa$ and $x\in A_\beta\setminus A_{\beta+1}$. Then there exists $\epsilon>0$ such that $D_\rho(x,\epsilon)\cap A_{\beta+1}=\emptyset$. Thus, $\U\sp{+}(x,\epsilon_x)\cap\A=\{A_\alpha:\beta<\alpha<\kappa\}$. This implies that all final segments are open. Moreover, in this same situation, $\U\sp{-}(x,\epsilon)\cap\A=\{A_\alpha:\alpha\leq\beta\}$. When $\gamma=\beta+1$, this means that the initial segment $\{A_\alpha:\alpha<\gamma\}$ is open. And when $\gamma<\kappa$ is a limit cardinal, then
$$
\{A_\alpha:\alpha<\gamma\}=\bigcup_{\beta<\gamma}{\{A_\alpha:\alpha\leq\beta\}}
$$
is also an open initial segment. This shows that the subspace topology of $\A$ contains all order-open sets.
\end{proof}

The following technical result will be used in our proof of Theorem \ref{main} but we will also be able to use it in Theorem \ref{embedding} below so we keep it as a separate lemma.

\begin{lemma}\label{sorgenfrey}
Let $\kappa$ be an infinite cardinal of uncountable cofinality and let $f:\kappa\to[0,\infty)$ be a function. Assume further that $f(0)=0$ and there exists a real number $\delta>0$ such that $f(\alpha)>\delta$ for each $0<\alpha<\kappa$. Then there exists $r\in(0,\infty)$ such that for every $\epsilon>0$ the set $\{\alpha<\kappa:f(\alpha)\in[r,\epsilon)\}$ is of cardinality $\kappa$ and $\{\alpha<\kappa:f(\alpha)<r\}$ is of cardinality strictly less than $\kappa$.
\end{lemma}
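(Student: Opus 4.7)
The plan is to identify a threshold value $r$ where the sublevel sets of $f$ "jump" from size less than $\kappa$ to size $\kappa$, and then to check both required properties using the uncountable cofinality of $\kappa$.

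\textbf{Setting up the threshold.} For each $s\in(0,\infty)$, write $S(s)=\{\alpha<\kappa:f(\alpha)<s\}$. Note that $\kappa=\bigcup_{n<\omega}S(n)$, since every $f(\alpha)$ is a finite real number. Because $\cof{\kappa}>\omega$, a countable union of sets each of cardinality $<\kappa$ cannot have cardinality $\kappa$; hence some $S(n)$ already has cardinality $\kappa$. Therefore the set
\[
T=\{s\in(0,\infty):|S(s)|=\kappa\}
\]
is non-empty, and it is clearly upward closed in $(0,\infty)$. Let $r=\inf T$. The lower bound hypothesis ($f(\alpha)>\delta$ whenever $\alpha>0$) shows that $|S(s)|\le 1$ for all $s\le\delta$, so $r\ge\delta>0$.

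\textbf{Verifying $|S(r)|<\kappa$.} Choose $N$ large enough that $r-1/N>0$. Then
\[
S(r)=\bigcup_{n\ge N}S\!\left(r-\tfrac{1}{n}\right),
\]
because $f(\alpha)<r$ implies $f(\alpha)<r-1/n$ for some $n$. Each $r-1/n$ is strictly less than $r=\inf T$, so $|S(r-1/n)|<\kappa$ for every such $n$. Since $\cof{\kappa}>\omega$, the supremum of a countable family of cardinals below $\kappa$ remains below $\kappa$, and hence $|S(r)|<\kappa$ as required.

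\textbf{Verifying that $[r,\epsilon)$-fibres are large.} Fix any $\epsilon>r$ (the case $\epsilon\le r$ gives an empty interval, which is the only reasonable reading of the statement). Then $\epsilon\in T$ by the upward closure, so $|S(\epsilon)|=\kappa$. Writing
\[
\{\alpha<\kappa:f(\alpha)\in[r,\epsilon)\}=S(\epsilon)\setminus S(r),
\]
we have $|S(\epsilon)|=\kappa$ and $|S(r)|<\kappa$; since $\kappa$ is an infinite cardinal, removing fewer than $\kappa$ elements from a set of size $\kappa$ still leaves a set of size $\kappa$. This gives the second clause.

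\textbf{Expected difficulty.} There is no serious obstacle; the only delicate point is making sure the uncountable cofinality hypothesis is invoked correctly in both places where a countable union has to stay below $\kappa$ (the existence of $r$, and the bound $|S(r)|<\kappa$). Note that $\cof{\kappa}>\omega$ is used only through this one closure property of cardinals below $\kappa$, and the hypothesis $f(\alpha)>\delta$ for $\alpha>0$ is needed exactly to ensure $r>0$.
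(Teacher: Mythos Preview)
Your proof is correct. Your approach differs from the paper's: where you work directly with the monotone family of sublevel sets $S(s)=f^{-1}[0,s)$ and take $r=\inf\{s:|S(s)|=\kappa\}$, the paper instead defines $U$ to be the set of $t\in(0,\infty)$ admitting some $\epsilon>0$ with $|f^{-1}[t,t+\epsilon)|<\kappa$, observes that $U$ is open in the Sorgenfrey topology, and uses the Lindel\"of property of the Sorgenfrey line to argue by contradiction that $U\neq(0,\infty)$; the required $r$ is then $\inf((0,\infty)\setminus U)$. Your argument is more elementary, avoiding any appeal to the Sorgenfrey line (which gives the lemma its name in the paper) and using only the single fact that a countable union of sets of size $<\kappa$ has size $<\kappa$ when $\cof\kappa>\omega$. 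The paper's route, by contrast, packages the combinatorics into a topological statement, at the cost of invoking hereditary Lindel\"ofness twice (once to show $U\neq(0,\infty)$, and implicitly once more to deduce $|S(r)|<\kappa$ from $(0,r)\subset U$). Your observation that the conclusion only makes sense for $\epsilon>r$ (equivalently, that the interval should read $[r,r+\epsilon)$) matches the way the lemma is actually applied later in the paper.
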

\begin{proof}
Let $U$ be the set of those points $t\in(0,\infty)$ such that there is $\epsilon>0$ such that $\{\alpha<\kappa:f(\alpha)\in[t,t+\epsilon)\}$ is of cardinality less than $\kappa$. Notice that $U$ is an open set of $(0,\infty)$ with the Sorgenfrey line topology.

Assume that $U=(0,\infty)$, we will reach a contradiction. Since the Sorgenfrey line is Lindel\"of, by the definition of $U$ it is possible to find countable sets $\{t_n:n<\omega\}\cup\{\delta_n:n<\omega\}\subset(0,\infty)$ such that $|\{\alpha<\kappa:f(\alpha)\in[t_n,t_n+\delta_n]\}|<\kappa$ for each $n<\omega$ and $(0,\infty)=\bigcup\{[t_n,t_n+\delta_n):n<\omega\}$. But from the fact that $\kappa$ is of uncountable cofinality and $\kappa\setminus\{0\}=\bigcup_{n<\omega}{\{\alpha<\kappa:f(\alpha)\in[t_n,t_n+\delta_n]\}}$ we obtain that $|\kappa\setminus\{0\}|<\kappa$, a contradiction.

Thus $U$ is a proper subset of $(0,\infty)$. Then define $r=\inf{([0,\infty)\setminus U)}$, notice that $r\geq\delta>0$ so $r\in (0,\infty)\setminus U$. From this the lemma follows easily.
\end{proof}

We finally have everything we need to give the proof of our main result.

\begin{proof}[{\bf Proof of Theorem \ref{main}}]
First, let $X=\bigcup\{K_\alpha:\alpha<\kappa\}$ be a decomposition as in Lemma \ref{decomposition}. Let $D$ be a dense set of $X$ such that $D\cap K_\alpha$ is countable for all $\alpha<\kappa$. We will apply Lemma \ref{basedense}, and consider the base $\{\U(x,q,r):x\in D,q,r\in\Q,q<r\}$.

For each $x\in D$, let $f_x:\kappa\to[0,\infty)$ be defined as $f_x(\alpha)=\rho(x,K_\alpha)$ for all $\alpha<\kappa$. Notice that if $x\in K_\beta$ for some $\beta<\kappa$, then there is $\delta_x>0$ such that $D_\rho(x,\delta_x)\subset K_\beta$. In this case, $f_x(\alpha)>\delta_x$ if $\alpha\neq\beta$. Thus, we may apply Lemma \ref{sorgenfrey} and obtain $r(x)\in(0,\infty)$ such that $|\{\alpha<\kappa:f_x(\alpha)\in[r(x),\epsilon)\}|=\kappa$ for all $\epsilon>0$ and $|\{\alpha<\kappa:f_x(\alpha)<r(x)\}|<\kappa$.

We shall construct a closed set $A$ with the following properties:
\begin{itemize}
\item[(a)] for each $\alpha<\kappa$, either $K_\alpha\subset A$ or $A\cap K_\alpha=\emptyset$,
\item[(b)] both $\{\alpha<\kappa:K_\alpha\subset A\}$ and $\{\alpha<\kappa:A\cap K_\alpha=\emptyset\}$ have cardinality $\kappa$, and
\item[(c)] for every $x\in D$ and $n<\omega$, the set $\{\alpha<\kappa:K_\alpha\subset A, f_x(\alpha)\in[r(x),r(x)+1/(n+1))\}$ is of cardinality $\kappa$.
\end{itemize}

First, enumerate $D=\{d_\alpha:\alpha<\kappa\}$. Then choose different ordinals 
$$
\Gamma=\{t(\alpha,\beta,n,i):\alpha\leq\beta<\kappa,n<\omega,i\in 2\}\subset\kappa
$$
such that $f_{d_\alpha}(t(\alpha,\beta,n,i))\in[r(d_\alpha),r(d_\alpha)+1/(n+1))$ every time $\alpha\leq\beta<\kappa$, $n<\omega$ and $i\in 2$. This is not hard to do by a recursion of length $\kappa$ such that in step $\gamma<\kappa$ we choose $\{t(\alpha,\gamma,n,i):\alpha\leq\gamma,n<\omega,i\in 2\}$ all different from the ordinals chosen in previous steps. Finally, let
$$
A=\bigcup\{K_\alpha:\alpha\in \Gamma\},
$$
which clearly has the properties we wanted.

Now we can define the closed sets that cannot be separated. For all $\beta<\kappa$, let $A_\beta=A\cap \left(\bigcup\{K_\alpha:\beta\leq\alpha\}\right)$ and $B_\beta=\bigcup \{K_\alpha:\beta\leq\alpha\}$. Then $\A=\{A_\alpha:\alpha<\kappa\}$ and $\B=\{B_\alpha:\alpha<\kappa\}$ are disjoint non-empty closed subsets of $\pair{CL(X),\wijs{\rho}}$ by property (b) and Lemma \ref{decreasing}. Let us start by emphasizing the following property.

\vskip6pt
\noindent{$(\ast)$} For each $x\in D$, there exists $\gamma<\kappa$ such that if $\gamma<\alpha<\kappa$, then $\rho(x,A_\alpha)=\rho(x,B_\alpha)=r(x)$.
\vskip6pt

By the definition of $r(x)$, $\{\alpha<\kappa:f_x(\alpha)<r(x)\}\subset\gamma$ for some $\gamma<\kappa$. So if $\gamma<\alpha<\kappa$, then $f_x(\alpha)\in[r(x),\infty)$. This implies that $r(x)\leq\rho(x,B_\alpha)$ every time $\gamma<\alpha<\kappa$. Moreover, by property $(c)$ in the definition of $A$, if $n<\omega$ and $\gamma<\alpha<\kappa$ there is $\beta<\kappa$ with $\alpha<\beta$, $K_\beta\subset A$ and $f_x(\beta)<r(x)+1/(n+1)$; this implies that $\rho(x,A_\alpha)\leq r(x)$. Hence, $r(x)\leq\rho(x,B_\alpha)=\rho(x,A_\alpha)\leq r(x)$ if $\gamma<\beta<\kappa$. Property $(\ast)$ is thus proved.

Now let us assume that $\A$ and $\B$ can be separated, we will then arrive to a contradiction. So there are disjoint open sets $U$ and $V$ such that $\A\subset U$ and $\B\subset V$. Let $\I$ be the set of non-empty open intervals with endpoints in $\Q$; notice that $\I$ is countable. Then for each $\alpha<\kappa$ there are finite subsets $S_\alpha$ and $T_\alpha$ of $D$, functions $\phi_\alpha:S_\alpha\to\I$ and $\psi_\alpha:T_\alpha\to \I$ such that
\begin{eqnarray}
A_\alpha&\in&\bigcap\{\U(x,p,q):x\in S_\alpha,\phi_\alpha(x)=(p,q)\}\subset U,\textrm{ and}\nonumber\\
B_\alpha&\in&\bigcap\{\U(x,p,q):x\in T_\alpha,\psi_\alpha(x)=(p,q)\}\subset V.\nonumber
\end{eqnarray}

By the regularity of $\kappa$, we may apply the Pressing Down Lemma (\cite[Lemma III.6.14]{kunen-set-theory-2011}) so there exists $\lambda<\kappa$ and $\Lambda\in[\kappa\setminus\lambda]\sp{\kappa}$ such that if $\beta\in\Lambda$ then $\{\alpha<\beta:T_\beta\cap K_\alpha\neq\emptyset\}\subset\lambda$. Since $|\bigcup\{D_\alpha:\alpha<\lambda\}|<\kappa$ and we are only dealing with finite sets, we may assume that for each $\beta,\gamma\in\Lambda$ and $\alpha<\lambda$ then $T_\beta\cap K_\alpha=T_\gamma\cap K_\alpha$. Call $T=T_\beta\cap(\bigcup\{D_\alpha:\alpha<\lambda\})$ for any $\beta\in\Lambda$. Then $\{T_\alpha:\alpha\in\Lambda\}$ forms a $\Delta$-system with root $T$ and has the additional property that $T_\beta\setminus T\subset\bigcup\{K_\alpha:\beta\leq\alpha\}$ for each $\beta\in\Lambda$.

Now apply the $\Delta$-system Lemma (\cite[Lemma III.2.6]{kunen-set-theory-2011}) so we may assume that $\{S_\alpha:\alpha\in\Lambda\}$ forms a $\Delta$-system with root $S$. Since $\I$ is countable, we may refine again and assume that there are functions $\phi:S\to\I$ and $\psi:T\to\I$ such that $\phi=\phi_\alpha\!\!\restriction_T$ and $\psi=\psi_\alpha\!\!\restriction_S$ for all $\alpha\in\Lambda$.

By property $(\ast)$, it is possible to find $\mu_0\in\Lambda$ such that for every $x\in S\cup T$ and $\mu_0<\alpha<\kappa$, $\rho(x,A_\alpha)=\rho(x,B_\alpha)=r(x)$. Now, for each $x\in S_{\mu_0}\setminus S$, let $z_x\in A_{\mu_0}$ be such that $\rho(x,z_x)\in\phi(x)$. Again by property $(\ast)$, there is $\mu_1\in\Lambda$ with $\mu_0<\mu_1$ such that for every $x\in S_{\mu_1}\setminus S$ and $\mu_1<\alpha<\kappa$, $\rho(x,B_\alpha)=r(x)$ and $\rho(z_x,B_\alpha)=r(z_x)$. Notice that by our construction $T_{\mu_1}\setminus T\subset B_{\mu_1}$.

Now, let $x\in S\cap T$. By the definition of $\mu_0$, $\phi(x)=\phi_{\mu_1}(x)$ and $\psi(x)=\psi_{\mu_1}(x)$ are intervals that contain the point $r(x)=\rho(x,A_{\mu_1})$. Thus, there exists $y_x\in A_{\mu_1}$ such that $\rho(x,y_x)\in\phi(x)\cap\psi(x)$. Similarly, if $x\in S\setminus T$ or $x\in T\setminus S$, there is $y_x\in A_{\mu_1}$ such that $\rho(x,y_x)\in\phi(x)$ or $y_x\in B_{\mu_1}$ such that $\rho(x,y_x)\in\psi(x)$, respectively.

Let $F=\{y_x:x\in S\cup T\}\cup\{z_x:x\in S_{\mu_0}\setminus S\}\cup (T_{\mu_1}\setminus T)$. Then $F$ is a non-empty finite (thus, closed) subset of $X$. We will now argue that $F\in U\cap V$, which is the contradiction we are looking for.

First we prove that $F\in U$. Start by considering a point $s\in S_{\mu_0}$ and let $\phi(s)=(p,q)$. By property (c) in the construction of $A$, it is easy to see that $\rho(s,A_{\mu_0})\leq r(s)$ so $p<r(s)$.
\vskip6pt
\noindent{\it Case 1. }$s\in S$
\vskip6pt
Since $F\subset B_{\mu_0}$, by the definition of $\mu_0$ for every $k\in F$, $\rho(s,k)\geq r(s)$. Notice further that $y_s\in F$ is such that $\rho(s,y_s)\in (p,q)$. Thus $\rho(s,F)\in(p,q)$.
\vskip6pt
\noindent{\it Case 2. }$s\notin S$
\vskip6pt
By the definition of $\mu_1$, given $k\in\{y_x:x\in S\cup T\}\cup (T_{\mu_1}\setminus T)$ then $\rho(s,k)\geq r(s)$. If $k\in\{z_x:x\in S_{\mu_0}\setminus S\}$, then $k\in A_{\mu_0}$ so $\rho(s,k)\geq\rho(s,A_{\mu_0})\in(p,q)$. Notice further that $z_s\in F$ is such that $\rho(s,z_s)\in (p,q)$. So we obtain that $\rho(s,F)\in(p,q)$.
\vskip6pt
So both in Cases 1 and 2 we obtain that $F\in\U(s,p,q)$. Thus, by considering all possible $s\in S$, we obtain that  $F\in U$.
\vskip6pt
Now let us prove that $F\in V$. Take $t\in T_{\mu_1}$ and let $\phi(t)=(p,q)$. By the definition of $r(t)$, $\rho(t,B_{\mu_1})\leq r(t)$ so $p<r(t)$.
\vskip6pt
\noindent{\it Case 1. }$t\in T$
\vskip6pt
Again, $F\subset B_{\mu_0}$ so for every $k\in F$, $\rho(t,k)\geq r(s)$. Also, $y_t\in F$ is such that $\rho(t,y_t)\in (p,q)$. Thus $\rho(t,F)\in(p,q)$.
\vskip6pt
\noindent{\it Case 2. }$t\notin T$
\vskip6pt
In this case, $t\in B_{\mu_1}$ so $\rho(t,B_{\mu_1})=0$ which means that $0\in(p,q)$. Then $t\in T_{\mu_1}\setminus T$, so $t\in F$ itself witnesses that $\rho(t,F)=0\in(p,q)$.
\vskip6pt
So in Cases 1 and 2 we obtain that $F\in U(t,p,q)$. Thus, by considering all possible $t\in T$, we obtain that  $F\in V$. This implies that $F\in U\cap V$ which is a contradiction. Thus, $\pair{CL(X),\wijs{\rho}}$ is not normal and we have finished the proof.
\end{proof}

As we have mentioned before, all locally compact metrizable spaces are locally separable.

\begin{coro}
Let $\kappa$ be a regular uncountable cardinal. If $\pair{X,\rho}$ is a locally compact metric space of weight $\kappa$, then the space $\pair{CL(\kappa),\wijs{\rho}}$ is not normal.
\end{coro}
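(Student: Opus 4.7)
The plan is extremely short, since this corollary is essentially a one-line reduction to Theorem \ref{main}. The only thing to verify is that local compactness implies local separability for metrizable spaces, and then invoke Theorem \ref{main}.

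First I would fix any $x\in X$ and use local compactness to choose a compact neighborhood $K$ of $x$. Because $X$ is metrizable, $K$ is a compact metric space, hence totally bounded, hence separable (for each $n<\omega$ choose a finite $(1/n)$-net in $K$, and take the union over $n$ to produce a countable dense subset of $K$). Thus $x$ has a separable neighborhood, so $X$ is locally separable.

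Having established that $X$ is a locally separable metric space of weight $\kappa$, with $\kappa$ a regular uncountable cardinal, the hypotheses of Theorem \ref{main} are satisfied, and so $\pair{CL(X),\wijs{\rho}}$ is not normal. (Note that the statement of the corollary as printed contains a typographical slip writing $CL(\kappa)$ in place of $CL(X)$; the intended conclusion is for $CL(X)$.)

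There is no real obstacle here: the entire content of the corollary is the implication ``locally compact metric $\Rightarrow$ locally separable metric'', which is immediate from separability of compact metric spaces, and the rest is a direct appeal to the main theorem already proved.
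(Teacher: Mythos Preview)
Your proposal is correct and matches the paper's approach exactly: the paper simply remarks that locally compact metrizable spaces are locally separable and then invokes Theorem~\ref{main}, which is precisely what you do. Your observation about the typographical slip $CL(\kappa)$ versus $CL(X)$ is also accurate.
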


In particular, 

\begin{coro}
Let $\kappa$ be a regular uncountable cardinal. If $\pair{X,\rho}$ is a discrete metric space of cardinality $\kappa$, then the space $\pair{CL(\kappa),\wijs{\rho}}$ is not normal.
\end{coro}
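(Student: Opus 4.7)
The plan is to deduce the statement immediately from the preceding corollary on locally compact metric spaces of regular uncountable weight. What needs to be verified is only that a discrete metric space of cardinality $\kappa$ falls under that corollary's hypothesis, namely that it is locally compact and has weight exactly $\kappa$.

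First I would note that in a discrete metric space $\pair{X,\rho}$, every point $x\in X$ is isolated: the ball $B_\rho(x,\epsilon_x)$ is the singleton $\{x\}$ for sufficiently small $\epsilon_x>0$ (this uses only that the induced topology on $X$ is discrete, which is what is meant by "discrete metric space"). Each singleton $\{x\}$ is therefore an open neighborhood of $x$ that is also compact, so $X$ is locally compact.

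Next I would check the weight. Since the topology on $X$ is discrete and each singleton is a minimal nonempty open set, every base for $X$ must contain $\{\{x\}:x\in X\}$, forcing the weight to be at least $|X|=\kappa$; conversely $\{\{x\}:x\in X\}$ is itself a base, so the weight equals $\kappa$. With local compactness and weight $\kappa$ established, the hypotheses of the previous corollary hold, and its conclusion gives the non-normality of $\pair{CL(X),\wijs{\rho}}$.

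There is no real obstacle here; the only subtle point is to be clear about the convention that "discrete metric space of cardinality $\kappa$" refers to a metric space whose induced topology is discrete and whose underlying set has cardinality $\kappa$, rather than, say, the space equipped with the specific $0/1$-valued metric (which is of course a special case anyway). Once this is settled, the proof is a one-line invocation of the preceding corollary.
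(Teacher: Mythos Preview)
Your proposal is correct and matches the paper's approach exactly: the paper introduces this corollary with the phrase ``In particular,'' immediately after the locally compact corollary, indicating that it is meant to follow at once by observing that a discrete metric space is locally compact and has weight equal to its cardinality. Your verification of these two facts is straightforward and complete.
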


Also, the class of locally separable metrizable spaces is strictly larger than the class of locally compact metrizable spaces.

\begin{ex}
If $J(\omega)$ is the hedgehog of $\omega$ spines (see \cite[4.1.15]{eng}) and $\kappa$ is an uncountable cardinal, then $J(\omega)\times\kappa$ is a locally separable metrizable space that is not locally compact and has weight $\kappa$.
\end{ex}

Now we will make some comments about Wijsman hyperspaces of metrizable spaces that are not necessarily locally separable. Let $X$ be a metrizable space with its weight a regular cardinal $\kappa$ and assume that 
$$
R=\{x\in X:x\textrm{ has no separable neighborhood}\}
$$
is non-empty. Clearly, no point of $R$ has a compact neighborhood. If $R$ is non-separable, by Theorem \ref{nlc}, then $\pair{CL(X),\wijs{\rho}}$ is not normal for any compatible metric $\rho$ of $X$. Thus, we are left with the case when $R$ is separable.

Fix some metric $\rho$ on $X$. The subspace $Y=X\setminus R$ is locally separable so by Lemma \ref{decomposition} it is the union of a pairwise disjoint and clopen family of separable subsets $\{K_\alpha:\alpha<\kappa\}$. Now consider the function
$$
e:CL(Y)\to CL(X)
$$
defined by $f(A)=A\cup R$. Clearly, this function is one-to-one. Recall that by Lemma \ref{basedense}, the Wijsman topology of both $CL(X)$ and $CL(Y)$ is determined by a dense set $D$ which we may choose to be disjoint from $R$ (because this set is nowhere dense). This easily implies that the Wijsman topology of the image $e[CL(Y)]$ coincides with the subspace topology as a subset of $CL(X)$. Thus, $e$ is an embedding.

Let $C\in CL(X)$ such that $C\in\cl[CL(X)]{e[CL(Y)]}\setminus e[CL(Y)]$. If there is $x\in R\setminus C$ then $\U\sp+(x,\epsilon)$ is a neighborhood of $C$ that misses $e[CL(Y)]$, where $\epsilon=\frac{1}{2}\rho(x,R)$. So in fact $R\subset C$. Also, if $C\setminus R\neq\emptyset$, then $C=e(C\setminus R)$. Thus, $C\subset R$ so $R=C$. This means that the only possible limit point of $e[CL(Y)]$ is $R$.

Now let $\A$ and $\B$ be the closed sets of $CL(Y)$ constructed in the same way as in the proof of Theorem \ref{main}. Consider the same notation as in that proof. If $R\notin\cl[CL(X)]{e[\B]}$, it is possible to separate $e[\B]$ from $\{R\}$ by open sets. Then it is easy to see that there are finitely many points $x_0,\ldots,x_m\in D$ such that $\rho(x_i,B_\alpha)<\rho(x_i,R)$ for $i\leq m$. Then $\rho(x_i,A_\alpha)<\rho(x_i,R)$ for $i\leq m$, which means that $e[\A]$ can also be separated from $\{R\}$. In other words, both $\A$ and $\B$ map to closed sets under $e$. Since $\A$ and $\B$ cannot be separated by open sets in $Y$, then its images cannot be separated in $X$. This proves that in this specific case, $\pair{CL(X),\wijs{\rho}}$ is not normal. However, we don't know how to do the case when $R\in\cl[CL(X)]{e[\B]}$.

It is worth remarking that, for example, if $J(\kappa)$ is the hedgehog with $\kappa$ spines (see \cite[4.1.15]{eng}) and if $I_\alpha$ denotes the $\alpha$'th spine with $0$ the common point to all the spines, then in the above decomposition we have the $I_\alpha=K_\alpha$ and $R$ consists of the single common point $0$ to all the spines. In the usual metric on the hedgehog, it is not hard to prove that the Wijsman hyperspace is not normal, even though $R$ is a limit of $\cl[CL(J(\kappa))]{e[\B]}$. And on the other hand, there is a topologically equivalent metric on $CL(J(\kappa))$ in which $R$ is not a limit point of $\cl[CL(J(\kappa))]{e[\B]}$. 

Now let us note that in contrast to the proofs of Theorems \ref{nlc} and \ref{herednorm}, our proof  did not give an embedding of a well-known non-normal space as a closed subspace of the Wijsman hyperspace. However, in some instances we can embed certain ordinals as closed subspaces.	 The decreasing sequences from Lemma \ref{decreasing} are almost ordinals and in fact with a bit more work we can construct such sequences so that resulting closed subspace is homeomorphic to the order type of the sequence with respect to the order topology. 

\begin{thm}\label{embedding}
Let $\kappa$ be a regular uncountable cardinal. If $\pair{X,\rho}$ is a locally separable metric space of weight $\kappa$, $\kappa$ with the order topology embeds as a closed subset of the space $\pair{CL(X),\wijs{\rho}}$. 
\end{thm}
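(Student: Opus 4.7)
The plan is to build a decreasing chain of closed subsets of $X$ satisfying the hypotheses of Lemma~\ref{decreasing} whose Wijsman subspace topology coincides with the order topology on $\kappa$; Lemma~\ref{decreasing} alone gives only a refinement of the order topology, so more care is required.

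I first recycle the setup from the proof of Theorem~\ref{main}: apply Lemma~\ref{decomposition} to write $X = \bigcup\{K_\alpha : \alpha < \kappa\}$ as a clopen separable partition, fix a dense set $D \subseteq X$ with $|D \cap K_\alpha| \leq \omega$, extract $r(x) > 0$ for each $x \in D$ via Lemma~\ref{sorgenfrey} applied to $f_x(\alpha) = \rho(x, K_\alpha)$, and construct $A = \bigcup\{K_\beta : \beta \in \Gamma\}$ satisfying property~(c) from the proof of Theorem~\ref{main}. Setting $A_\alpha = A \cap \bigcup\{K_\beta : \beta \geq \alpha\}$, Lemma~\ref{decreasing} tells us $\{A_\alpha : \alpha < \kappa\}$ is a closed chain in $\pair{CL(X),\wijs{\rho}}$, and property~$(\ast)$ yields, for each $x \in D$, an ordinal $\gamma(x) < \kappa$ with $\rho(x, A_\alpha) = r(x)$ whenever $\alpha > \gamma(x)$.

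Next I pass to a cofinal subchain via a diagonal recursion. Enumerate $D = \{d_\mu : \mu < \kappa\}$ and define a strictly increasing continuous $\xi : \kappa \to \kappa$ by $\xi_0 = 0$, $\xi_{\eta+1} = \max(\xi_\eta, \gamma(d_\eta)) + 1$, and $\xi_\eta = \sup_{\mu < \eta}\xi_\mu$ at limits; then $C = \{\xi_\eta : \eta < \kappa\}$ is a club in $\kappa$, and the reindexed chain $\{A_{\xi_\eta} : \eta < \kappa\}$ again satisfies the hypotheses of Lemma~\ref{decreasing}. Consequently it is closed in $\pair{CL(X),\wijs{\rho}}$ and its subspace topology refines the order topology on $\kappa$ (under $A_{\xi_\eta} \leftrightarrow \eta$).

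To upgrade the refinement to equality, I verify that each basic Wijsman open $\U(d_\mu, p, q)$ intersects the chain in an order-open subset of $\kappa$. By construction $\xi_\eta > \gamma(d_\mu)$ for every $\eta > \mu$, hence the non-decreasing function $\eta \mapsto \rho(d_\mu, A_{\xi_\eta})$ is constant equal to $r(d_\mu)$ on the final segment $(\mu, \kappa)$; all of its variation takes place on the initial segment $[0, \mu]$, where it is monotone with countable range. The hard part is ensuring left-continuity of $\eta \mapsto \rho(d_\mu, A_{\xi_\eta})$ at each limit $\eta_0 \leq \mu$, so that final-segment threshold sets $\{\eta : \rho(d_\mu, A_{\xi_\eta}) > p\}$ start at a successor ordinal and are thus order-open. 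This is where the ``bit more work'' mentioned in the paragraph preceding the theorem is concentrated: one strengthens the selection of $\Gamma$ in the construction of $A$ (for instance, requiring each $t(\alpha, \gamma, n, i)$ to additionally satisfy $f_{d_{\alpha'}}(t) \geq r(d_{\alpha'})$ for all $\alpha' \leq \gamma$, which is possible because one is only excluding $<\kappa$ many ordinals at each step, so that $\Gamma$-elements with $f_{d_\mu}(\cdot) < r(d_\mu)$ are confined to positions chosen before stage $\mu$) and diagonalizes over the at most countably many jump ordinals of each $\eta \mapsto \rho(d_\mu, A_{\xi_\eta})$ when choosing $\xi$, so as to arrange that the resulting chain is genuinely homeomorphic to $\kappa$.
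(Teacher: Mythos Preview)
Your reduction via Lemma~\ref{decreasing} and the extraction of $r(x)$ through Lemma~\ref{sorgenfrey} match the paper, but from that point the paper takes a different route. It does \emph{not} reuse the chain $\{A_\alpha\}$ from Theorem~\ref{main} and then thin it along a club; instead it builds a fresh partition $\{X_\beta:\beta<\kappa\}$ of $X$ (each $X_\beta$ a union of certain $K_\alpha$'s) by a recursion arranged so that whenever $x\in D\cap X_\beta$, the set $T(x)=\{\alpha:f_x(\alpha)<r(x)\}$ is entirely absorbed into $\bigcup_{\alpha\le\beta+1}X_\alpha$. Setting $A_\beta=\bigcup_{\alpha\ge\beta}X_\alpha$, this forces $\alpha\mapsto\rho(x,A_\alpha)$ to equal $0$ on $[0,\beta]$ and exactly $r(x)$ on $[\beta+2,\kappa)$; all variation occurs at the successors $\beta+1$ and $\beta+2$, so continuity at every limit is automatic and Lemma~\ref{decreasing} upgrades to a homeomorphism with no further work.

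Your club-thinning approach, by contrast, has a real gap in its last paragraph. Because $\xi$ is declared continuous, at each limit $\eta_0$ the value $\xi_{\eta_0}=\sup_{\eta<\eta_0}\xi_\eta$ is \emph{forced}, and you need this forced value to avoid the jump set $J_\mu$ of $\alpha\mapsto\rho(d_\mu,A_\alpha)$ for every $\mu\ge\eta_0$ simultaneously. Your strengthening of $\Gamma$ only bounds the \emph{cardinality} of $\Gamma\cap T(d_\mu)$ (by $|\mu|$), not the location of its elements in $\kappa$: ordinals $t(\alpha,\gamma,n,i)$ selected at stages $\gamma<\mu$ may land anywhere, so the limit ordinals at which they accumulate---and hence the elements of $J_\mu$---are uncontrolled relative to $\xi_{\eta_0}$. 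The proposed ``diagonalization over the countably many jump ordinals of each $\eta\mapsto\rho(d_\mu,A_{\xi_\eta})$ when choosing $\xi$'' is circular as written (those jumps depend on $\xi$); read charitably as avoiding the jump ordinals of the un-reindexed functions $\alpha\mapsto\rho(d_\mu,A_\alpha)$, you would need $\bigcup_{\mu<\kappa}J_\mu$ to be non-stationary in order to thread a club through its complement, and no argument for this is offered. The paper sidesteps the entire difficulty by making ``jumps only at successors'' a built-in feature of the partition rather than something to be engineered after the fact.
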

\begin{proof}
Let $X=\bigcup\{K_\alpha:\alpha<\kappa\}$ be a partition as in Lemma \ref{decomposition}. Let $D$ be a dense set of $X$ such that $D_\alpha= D\cap K_\alpha$ is countable for all $\alpha<\kappa$. Notice that by Lemma \ref{basedense}, for all our arguments it is enough to consider the open sets defined by elements of $D$.

The proof will be carried out by recursively constructing a partition $\{X_\alpha:\alpha<\kappa\}$ of $X$, where $X_\beta$ is a union of elements of $\{K_\alpha:\alpha<\kappa\}$ for each $\beta<\kappa$. Once we have this partition, we define $A_\beta=\bigcup\{X_\alpha:\beta\leq\alpha<\kappa\}$ for all $\beta<\kappa$. By Lemma \ref{decreasing}, the set $\A=\{A_\alpha:\alpha<\kappa\}$ will be closed and its topology will be finer than the order topology on $\kappa$. Our objective is to construct $\{X_\alpha:\alpha<\kappa\}$ in such a way that the subspace topology coincides with the order topology on $\kappa$. 

Let us comment informally on how to achieve this. By Lemma \ref{decreasing}, we only have to prove that for each limit ordinal $\lambda$, $\lim_{\beta\rightarrow\lambda}A_\beta=A_\lambda$ with the Wijsman topology. This amounts to proving that for each $x\in D$, the function $\alpha\mapsto\rho(x,A_\alpha)$ is continuous at limit ordinals. If we make sure that changes in the values of this function occur at successor stages, we will have no continuity problems at limits. In order to do this, we will first prove that for each $x\in D$ there is a maximal possible value of the function $\alpha\mapsto\rho(x,A_\alpha)$ (see property $(\ast)$ below).

For each $x\in D$, let $f_x:\kappa\to[0,\infty)$ be defined as $f_x(\alpha)=\rho(x,K_\alpha)$ for all $\alpha<\kappa$. Notice that if $x\in D_\beta$ for some $\beta<\kappa$, then there is $\delta_x>0$ such that $D_\rho(x,\delta_x)\subset K_\beta$. In this case, $f_x(\alpha)>\delta_x$ if $\alpha\neq\beta$. Thus, we may apply Lemma \ref{sorgenfrey} and obtain $r(x)\in(0,\infty)$ such that $|\{\alpha<\kappa:f_x(\alpha)\in[r(x),\epsilon)\}|=\kappa$ for all $\epsilon>0$ and $|\{\alpha<\kappa:f_x(\alpha)<r(x)\}|<\kappa$. So define $T(x)=\{\alpha<\kappa:f_x(\alpha)<r(x)\}$. The following property of $r(x)$ will be important for our construction.

\vskip6pt
\noindent{$(\ast)$}  If $x\in D$ and $A\in CL(X)$ is such that $|\{\alpha<\kappa:K_\alpha\not\subset A\}|<\kappa$, then $\rho(x,A)\leq r(x)$.
\vskip6pt

Now we can finally construct the partition $\{X_\alpha:\alpha<\kappa\}$. Instead of constructing the partition directly, we will recursively construct a partition $\{S_\alpha:\alpha<\kappa\}$ of $\kappa$ and then define $X_\beta=\bigcup\{K_\alpha:\alpha\in S_\beta\}$ for each $\beta<\kappa$. Our recursion will be done in steps of length $\omega$. So let $\gamma<\kappa$ be a limit and assume that we have constructed $\{S_\alpha:\alpha<\gamma\}$ in such a way that the following properties hold for all $\beta<\gamma$:

\begin{itemize}
\item[(i)] $0<|S_\beta|<\kappa$,
\item[(ii)] $\beta\in\bigcup\{S_\alpha:\alpha\leq\beta\}$, and
\item[(iii)] if $x\in D\cap X_\beta$, then $T(x)\subset\bigcup\{S_\alpha:\alpha\leq\beta+1\}$.
\end{itemize}

Let $s(\gamma)=\min\{\alpha<\kappa:\alpha\notin\bigcup\{X_\beta:\beta<\gamma\}\}$ and let $S_\gamma=\{s(\gamma)\}$, notice that conditions $(i)$ and $(ii)$ hold for $\beta=\gamma$. Now assume that $\{S_{\gamma+n}:n\leq k\}$ have been defined satisfying conditions (i), (ii) and (iii) (where meaningful). Since $\kappa$ is regular, $F=\bigcup\{T(x):x\in D\cap X_{\gamma+k}\}$ is a set of cardinality strictly less than $\kappa$. Let $s(\gamma+k+1)=\min\{\alpha<\kappa:\alpha\notin F\cup(\bigcup\{S_\beta:\beta\leq\gamma+k\})\}$ and define 
$$
S_{\gamma+k+1}=(F\setminus\bigcup\{S_\beta:\beta\leq\gamma+k\})\cup\{s(\gamma+k+1)\}.
$$
Then it is easy to see that conditions (i), (ii) and (iii) hold for this step of the construction. Thus, we can carry out our construction through all steps.

By the discussion at the begining of the proof and Lemma \ref{decreasing}, we only have to prove that if $\gamma<\kappa$ is a limit ordinal and $x\in D$, then $\{\rho(x,A_\alpha):\alpha<\gamma\}$ converges (as a net) to $\rho(x,A_\gamma)$.

If $x\in A_\gamma$, then $x\in A_\alpha$ for all $\alpha<\gamma$ so $\{\rho(x,A_\alpha):\alpha\leq\gamma\}$ is constant $0$ and the convergence is trivial.

Now assume that $x\notin A_\gamma$. This means that there is $\beta<\gamma$ such that $x\in X_\beta$. By our construction, $T(x)\subset\bigcup\{S_\alpha:\alpha\leq\beta+1\}$. Thus, if $\beta+2<\alpha\leq\gamma$, this implies that $\rho(x,A_\alpha)\geq r(x)$ and by $(\ast)$, we obtain that in fact $\rho(x,A_\alpha)= r(x)$. So $\{\rho(x,A_\alpha):\beta+2\leq\gamma\}$ is constant equal to $r(x)$. Thus in this case the convergence also holds.

This completes the proof that $\A$ is a closed subspace of $\pair{CL(X),\wijs{\rho}}$ that is homeomorphic to $\kappa$ with the order topology.
\end{proof}

\begin{coro}\label{embeddingcoro}
Let $\kappa$ be a regular uncountable cardinal. If $\pair{X,\rho}$ is a locally separable metric space of weight $\kappa$, the space $\pair{CL(X),\wijs{\rho}}$ is not paracompact. 
\end{coro}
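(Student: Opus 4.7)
The plan is to deduce this corollary directly from Theorem \ref{embedding} together with the classical fact that a regular uncountable cardinal equipped with the order topology is not paracompact. Since paracompactness is inherited by closed subspaces (and the Wijsman topology is Hausdorff, so there is no technicality here), exhibiting $\kappa$ as a closed subspace of $\pair{CL(X),\wijs{\rho}}$ will immediately rule out paracompactness of the hyperspace.

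Concretely, I would first invoke Theorem \ref{embedding} to obtain a closed subspace $\A$ of $\pair{CL(X),\wijs{\rho}}$ homeomorphic to $\kappa$ with the order topology. The remaining work is to recall why $\kappa$ itself is not paracompact. The standard argument proceeds as follows: because $\cof{\kappa}>\omega$, every countable subset of $\kappa$ is bounded and therefore admits a supremum in $\kappa$ that serves as a cluster point, so $\kappa$ is countably compact; on the other hand, the cover $\{[0,\alpha):\alpha<\kappa\}$ has no finite subcover, so $\kappa$ is not compact. Since a countably compact paracompact Hausdorff space must be compact, $\kappa$ cannot be paracompact.

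Combining the two observations closes the argument: if $\pair{CL(X),\wijs{\rho}}$ were paracompact, then its closed subspace $\A$, being homeomorphic to $\kappa$, would inherit paracompactness, contradicting the previous paragraph. I do not anticipate any genuine obstacle here; the corollary is essentially a one-line consequence of the embedding theorem, with no hyperspace-specific construction needed beyond what is already in Theorem \ref{embedding}.
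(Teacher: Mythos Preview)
Your proposal is correct and matches the paper's intent: the corollary is stated immediately after Theorem \ref{embedding} with no separate proof, so the paper treats it as an immediate consequence of embedding $\kappa$ as a closed subspace together with the classical non-paracompactness of $\kappa$. Your explicit justification (countably compact but not compact, plus hereditariness of paracompactness to closed subspaces) is exactly the standard argument being left implicit.
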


Now consider a singular cardinal $\lambda$ with a discrete metric $\rho$. Since the weight of the Wijsman hyperspace is at most $\lambda$, the ordinals that can be embedded in the Wijsman hyperspace are less than or equal to $\lambda$. In order to obtain a non-paracompactness result as in Corollary \ref{embeddingcoro}, we are interested in what cardinals of uncountable cofinality can be embedded in the Wijsman hyperspace.

\begin{ex}
For every two infinite cardinals $\lambda<\kappa$ there is a discrete metric space $\pair{X,\rho}$ with $|X|=\kappa$ such that $\pair{CL(X),\wijs{\rho}}$ has a closed copy of the ordered space $\lambda$.
\end{ex}
\begin{ex2}
Let $X$ be of cardinality $\kappa$, $W\in[X]\sp{\lambda}$ and define a metric $\rho$ on $X$ in such a way that
$$
\rho(x,y)=\left\{
\begin{array}{cc}
0,& \textrm{ if }x=y,\\
1,& \textrm{ if }x\neq y\textrm{ and }\{x,y\}\subset W,\\
2,& \textrm{ if }x\neq y\textrm{ and }\{x,y\}\cap (X\setminus W)\neq\emptyset.
\end{array}
\right.
$$
for all $x,y\in X$. Give an enumeration $W=\{w_\alpha:\alpha<\lambda\}$ and define $\psi:\lambda\to CL(X)$ by $\psi(\beta)=\{w_\alpha:\beta\leq\alpha\}$. It is easy to see that $\psi$ is continuous. By Lemma \ref{decreasing} it easily follows that $\psi$ is closed.
\end{ex2}

\begin{ques}
Given a singular cardinal $\lambda$, does there exist a discrete metric $\rho$ on $\lambda$ such that $\pair{CL(\lambda),\wijs{\rho}}$ has no closed copies of $\kappa$ for all regular $\kappa<\lambda$?
\end{ques}

Finally, we know nothing about normality of the Wijsman hyperspace when the weight of the base space is a singular cardinal, even if the base space is discrete.

\begin{ques}
Does there exist a locally separable (or discrete, in particular) metric space $\pair{X,\rho}$ such that $\pair{CL(X),\wijs{\rho}}$ is normal?
\end{ques}

\section{Isolated points}

In section 2 of the paper \cite{cao-jun-moors}, the authors study when the finite subsets of a metric space can be isolated in the Wijsman hyperspace. In \cite[Example 2.3]{cao-jun-moors} the authors construct a countable discrete metric space $\pair{X,\rho}$ such that every non-empty finite subset is isolated in $\pair{CL(X),\wijs{\rho}}$ and they ask whether it is possible to do this with sets of arbitrary cardinality \cite[Question 2.4]{cao-jun-moors}. The following result answers this question in the affirmative. Recall that a discrete metric space $\pair{X,\rho}$ is \emph{uniformly discrete} if there is $\epsilon>0$ such that $\rho(x,y)>\epsilon$ for all $x,y\in X$ with $x\neq y$.

\begin{thm}\label{solutionisolated}
Let $\pair{X,\rho}$ be a uniformly discrete and bounded metric space. Then there exists a metric space $\pair{Y,\rho\sp\prime}$ such that $X\subset Y$, $|Y|=|X|$, $\rho\sp\prime\!\!\restriction_{X\times X}=\rho$ and each $F\in[Y]\sp{<\omega}\setminus\{\emptyset\}$ is isolated in $\pair{CL(Y),\wijs{\rho\sp\prime}}$.
\end{thm}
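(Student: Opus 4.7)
My plan is a recursive construction. For the trivial case where $X$ is finite, $CL(X)$ is finite and $\wijs{\rho}$ is Hausdorff, hence discrete, so $Y=X$ works. Assume $|X|$ infinite henceforth. Set $Y_0=X$, and recursively $Y_{n+1}=Y_n\cup\{p_F^n:F\in [Y_n]\sp{<\omega}\setminus\{\emptyset\}\}$, adding one fresh \emph{witness point} for each finite non-empty subset of $Y_n$. Let $Y=\bigcup_{n<\omega}Y_n$. Since $|[Y_n]\sp{<\omega}|=|Y_n|$ for infinite $|Y_n|$, we have $|Y|=|X|$.

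The distances are arranged so that each $\rho'(p_F^n,\cdot)$ attains only two values $\alpha_n>\beta_n$, with $\alpha_n$ on $F$ and $\beta_n$ off $F$, and so that later-stage witnesses look close to earlier ones. Let $M$ bound $\rho$ on $X$, let $\epsilon>0$ witness its uniform discreteness, put $c=\min(\epsilon,M/2)$, and set $\alpha_n=M/2+c\cdot 2^{-n}$ and $\beta_n=M/2+c\cdot 2^{-n-1}$, so that $M/2<\beta_n<\alpha_n=\beta_{n-1}\leq M$ and $\alpha_n-\beta_n\leq c\leq\epsilon$. At stage $n+1$ declare $\rho'(p_F^n,y)=\alpha_n$ if $y\in F$, $\rho'(p_F^n,y)=\beta_n$ for every other $y\in Y_n$, and $\rho'(p_F^n,p_{F'}^n)=\beta_n$ for any two distinct same-stage witnesses. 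This determines $\rho'$ on all of $Y\times Y$.

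Isolation then follows: given $F\in [Y]\sp{<\omega}\setminus\{\emptyset\}$, pick $n$ with $F\subseteq Y_n$ and take the basic open neighborhood $\U\sp+(p_F^n,\beta_n)\cap\bigcap_{y\in F}\U\sp-(y,c/2)$. In the first factor, a point of $Y_n\setminus F$ or a distinct same-stage witness sits at distance $\beta_n$ from $p_F^n$, $p_F^n$ itself at distance $0$, and every later-stage witness $p_G^m$ at distance $\leq\alpha_m=\beta_{m-1}\leq\beta_n$; all of these violate $\rho'(A,p_F^n)>\beta_n$, so $A\subseteq F$. The remaining factors each force some $y\in F$ into $A$, because every two distinct points of $Y$ are at distance $>c$ and so $y$ is the only point of $Y$ within $c/2$ of itself; combining, $A=F$. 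The main obstacle is the triangle-inequality verification across the recursive construction: all cases reduce to $\alpha_n-\beta_n\leq\epsilon$ (for pairs in $X$), $\alpha_n\leq 2\beta_m$ for all $n,m$ (for a later-stage middle witness), and $\alpha_n\leq M$ (bounding the diameter), but require extensive case analysis since the endpoints and middle point of a triangle can each be a witness from any stage.
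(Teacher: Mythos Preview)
Your proof is correct and follows essentially the same strategy as the paper's: recursively adjoin to $Y_n$ one witness point $p_F^n$ for each finite $F\subset Y_n$, giving the witness exactly two distances to the rest of the space so that $F$ is the set of points farthest from $p_F^n$, and then isolate $F$ via a single $\U^+$-condition at $p_F^n$ together with $\U^-$-conditions forcing $F\subset A$. The only inessential difference is the bookkeeping of values: the paper takes the ``outside'' distance from $x(n,F)$ to be a fixed $M$ and the ``inside'' distance $M+\eta_n$ with $\eta_n$ strictly decreasing, whereas you let both values $\alpha_n,\beta_n$ decrease with $\alpha_{n+1}=\beta_n$; either choice makes later-stage witnesses automatically too close to $p_F^n$, which is the key point. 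One tiny inaccuracy: you assert that any two distinct points of $Y$ are at distance $>c$, but for pairs in $X$ you only have $\geq\epsilon\geq c$; this is harmless since $c>c/2$ still makes each $\U^-(y,c/2)$ force $y\in A$.
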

\begin{proof}
If $X$ is finite, let $X=Y$ and $\rho\sp\prime=\rho$. So assume that $X$ is infinite. Fix $0<\eta<1$ be such that for every $x,y\in X$ with $x\neq y$, $\eta\leq\rho(x,y)$. Let $M>0$ be any number such that $2M\geq\sup\{\rho(x,y):x,y\in X\}$.

Let $\{X_n:n<\omega\}$ be a pairwise disjoint collection of sets of cardinality $|X|$ such that $X_0=X$. Call $Y_n=\bigcup\{X_i:i\leq n\}$ for each $n<\omega$ and let $Y=\bigcup\{X_i:i<\omega\}$. For each $n<\omega$, give an injective enumeration $X_{n+1}=\{x(n,F):F\in[Y_n]\sp{<\omega}\}$. Let $\{\eta_n:n<\omega\}\subset(0,\infty)$ be a strictly decreasing sequence such that $\eta_0=\eta$.

We define $\rho\sp\prime:Y\times Y\to(0,\infty)$ as follows
$$
\rho\sp\prime(p_0,p_1)=\left\{
\begin{array}{ll}
\rho(p_0,p_1), & \textrm{ if }p_0,p_1\in X\\
0, & \textrm{ if } p_0=p_1,\\
M, & \textrm{ if }\exists\ \! i\in\{0,1\}, n<\omega,F\in[Y_n]\sp{<\omega}\textrm{ with }\\ & \ p_i=x(n,F), p_{1-i}\in Y_{n+1}\setminus F,\\
M+\eta_n, & \textrm{ if }\exists\ \!i\in\{0,1\}, n<\omega,F\in[Y_n]\sp{<\omega}\textrm{ with }\\ & \ p_i=x(n,F), p_{1-i}\in F.
\end{array}
\right.
$$
In order to prove that $\rho\sp\prime$ is a metric, it suffices to prove the triangle inequality (the rest of the proof is easy). So let $p,q,r\in Y$. 
 If $\{p,q,r\}\subset X$ then by our hypothesis, $\rho$ restricted to $X$ is a metric and we know that the triangle inequality holds for this space. So assume that $r\in X_{m+1}$ for some $m<\omega$, we also know that $r=x(m,H)$ for some $H\in[Y_m]\sp{<\omega}$.

Let $s=\rho\sp\prime(p,q)$. By the definitions of $\eta$ and $M$ and the construction of $\rho\sp\prime$, we know that 
$$
(\ast)\ \eta_m<s\leq 2M.
$$

Then we have three cases to consider.

\vskip6pt
\noindent{\it Case 1: $p,q\notin H$.}
\vskip6pt
In this case the triangle $pqr$ has sides $M$, $M$ and $s$. Then the inequalities to be checked are: $M\leq M+s$, which is clear; and $s\leq M+M$ which is true by property $(\ast)$.

\vskip6pt
\noindent{\it Case 2: $p\in H$ and $q\notin H$.}
\vskip6pt
In this case the triangle $pqr$ has sides $M+\eta_m$, $M$ and $s$. The inequalities to be checked are: $M<(M+\eta_m)+s$, which is clear; $(M+\eta_m)<M+s$, which follows from $(\ast)$; and $s\leq 2M+\eta_m$, which follows again from $(\ast)$.

\vskip6pt
\noindent{\it Case 3: $p,q\in H$.}
\vskip6pt
In this case the triangle $pqr$ has sides $M+\eta_m$, $M+\eta_m$ and $s$. The inequalities to be checked are: $(M+\eta_m)\leq (M+\eta_m)+s$, which is clear; and $s\leq (M+\eta_m)+(M+\eta_m)$, which follows from $(\ast)$.\vskip6pt

This proves that $\rho\sp\prime$ defines a metric. Now, finally, let $F\in[Y]\sp{<\omega}$. Let $k=\min\{n<\omega:F\subset Y_n\}$ and consider the point $x(k,F)\in X_{k+1}$.  We next prove that
$$
\{A\in CL(Y):F\subset A,\rho\sp\prime(x(k,F),A)>M+\eta_{k+1}\}=\{F\},
$$
which implies that $F$ is isolated in $\pair{CL(Y),\wijs{\rho}}$.

Let $z\in Y$, it is enough to prove that $\rho\sp\prime(x(k,F),z)>\eta_{k+1}$ if and only if $z\in F$. If $z\in F$ then $\rho\sp\prime(x(k,F),z)=M+\eta_k>M+\eta_{k+1}$. If $z\in Y_{k+1}\setminus F$, then $\rho\sp\prime(x(k,F),z)=M<M+\eta_{k+1}$. Otherwise, $z=x(l,G)$ for some $l\geq k+1$ and $G\subset Y_l$. If $x(k,F)\in G$, then $\rho\sp\prime(x(k,F),z)=M+\eta_l\leq M+\eta_{k+1}$. If $x(k,G)\notin G$, then $\rho\sp\prime(x(k,F),z)=M<M+\eta_{k+1}$. Thus, we have finished the proof. 
\end{proof}

Recall that if $X$ is a set with the metric $\rho$ such that $\rho(x,y)=1$ if $x\neq y$ (this is called the $0-1$ metric), then $\pair{CL(X),\wijs{\rho}}$ is homeomorphic to the space $2\sp X\setminus\{\mathbf{0}\}$ (see \cite[Example 2.1]{cao-jun-moors}). Thus we obtain the following corollaries.

\begin{coro}
For every cardinal $\kappa\neq 0$ there is a metric space $\pair{Y,\rho}$ such that $[Y]\sp{<\omega}\setminus\{\emptyset\}$ is an open and discrete subset of $\pair{CL(Y),\wijs{\rho}}$.
\end{coro}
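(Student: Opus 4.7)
The plan is to obtain this as an immediate consequence of Theorem \ref{solutionisolated}. Given a cardinal $\kappa\neq 0$, I would first produce a uniformly discrete and bounded metric space of cardinality $\kappa$ to feed into that theorem. The simplest choice is to let $X$ be any set of cardinality $\kappa$ endowed with the $0$-$1$ metric $\rho(x,y)=1$ for $x\neq y$ (and $0$ otherwise); this metric is clearly uniformly discrete (take $\epsilon=1/2$) and bounded, so the hypotheses of Theorem \ref{solutionisolated} are met. Applying the theorem then yields a metric space $\pair{Y,\rho\sp\prime}$ with $|Y|=\kappa$ such that every $F\in[Y]\sp{<\omega}\setminus\{\emptyset\}$ is an isolated point of $\pair{CL(Y),\wijs{\rho\sp\prime}}$.

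Next I would translate the pointwise isolation statement into the global openness and discreteness assertion. Since each singleton $\{F\}$ with $F\in[Y]\sp{<\omega}\setminus\{\emptyset\}$ is open in $\pair{CL(Y),\wijs{\rho\sp\prime}}$, the set
$$
[Y]\sp{<\omega}\setminus\{\emptyset\}=\bigcup_{F\in[Y]\sp{<\omega}\setminus\{\emptyset\}}\{F\}
$$
is a union of open sets of $\pair{CL(Y),\wijs{\rho\sp\prime}}$, hence open. The induced subspace topology is discrete because each of its points is already isolated in the ambient hyperspace. This gives precisely the conclusion of the corollary.

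There is really no significant obstacle: the result is a direct reformulation of Theorem \ref{solutionisolated}. The only minor point to verify is that the recipe above covers every nonzero cardinal, including finite ones. This is handled by the theorem itself, whose statement explicitly takes care of the finite case by setting $Y=X$; when $\kappa$ is finite, $\pair{CL(Y),\wijs{\rho\sp\prime}}$ is a finite space and therefore trivially discrete, so $[Y]\sp{<\omega}\setminus\{\emptyset\}=CL(Y)$ is open and discrete as required.
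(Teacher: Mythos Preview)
Your proposal is correct and follows exactly the route the paper intends: feed a set of cardinality $\kappa$ with the $0$--$1$ metric into Theorem \ref{solutionisolated}, and observe that a set of isolated points is automatically open and discrete. The paper places the remark about the $0$--$1$ metric immediately before this corollary precisely to signal this argument, and gives no further proof.
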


\begin{coro}\label{changes}
For every infinite cardinal $\kappa$ there is a metric space $\pair{Y,\rho}$ and a subset $X\subset Y$ such that $|X|=|Y|=\kappa$ and $[X]\sp{<\omega}\setminus\{\emptyset\}$ is an open discrete subset of $\pair{CL(Y),\wijs{\rho}}$ but its closure in $\pair{CL(X),\wijs{\rho\restriction_{X\times X}}}$ is homeomorphic to the space ${}\sp{\kappa}{2}\setminus\{0\}$.
\end{coro}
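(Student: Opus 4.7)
The plan is to derive the corollary by combining Theorem \ref{solutionisolated} with the homeomorphism $\pair{CL(X),\wijs{\sigma}}\cong {}\sp{X}{2}\setminus\{\mathbf{0}\}$ (where $\sigma$ is the $0$-$1$ metric on $X$) mentioned just before the statement. First I would take $X$ of cardinality $\kappa$ equipped with the $0$-$1$ metric $\sigma$, which is uniformly discrete and bounded. Applying Theorem \ref{solutionisolated} to $\pair{X,\sigma}$ produces a metric space $\pair{Y,\rho}$ with $X\subset Y$, $|Y|=\kappa$, $\rho\restriction_{X\times X}=\sigma$, and every member of $[Y]\sp{<\omega}\setminus\{\emptyset\}$ isolated in $\pair{CL(Y),\wijs{\rho}}$. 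Since $[X]\sp{<\omega}\setminus\{\emptyset\}\subset[Y]\sp{<\omega}\setminus\{\emptyset\}$ is then a union of isolated singletons of $\pair{CL(Y),\wijs{\rho}}$, it is open and discrete there, which settles the first half of the claim.

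For the second half, the restricted metric $\rho\restriction_{X\times X}=\sigma$ is the $0$-$1$ metric, so by the cited homeomorphism the assignment $A\mapsto\chi_A$ identifies $\pair{CL(X),\wijs{\sigma}}$ with ${}\sp{X}{2}\setminus\{\mathbf{0}\}$ in the Tychonoff product topology. Under this identification, $[X]\sp{<\omega}\setminus\{\emptyset\}$ corresponds to the non-zero finitely-supported functions. A routine density argument in the product topology shows that this set is dense in ${}\sp{X}{2}\setminus\{\mathbf{0}\}$: any basic open set is determined by finitely many coordinate constraints, and a non-zero finitely-supported witness can always be produced either by truncating the target function (if some prescribed value equals $1$) or by adjoining a single $1$ at some coordinate outside the prescribed finite set (otherwise). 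Hence the closure of $[X]\sp{<\omega}\setminus\{\emptyset\}$ in $\pair{CL(X),\wijs{\sigma}}$ is the entire hyperspace, which equals ${}\sp{X}{2}\setminus\{\mathbf{0}\}\cong{}\sp{\kappa}{2}\setminus\{\mathbf{0}\}$ since $|X|=\kappa$.

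I do not anticipate any real obstacle: the substantive construction has been packaged into Theorem \ref{solutionisolated}, and the role of the corollary is to record the contrast that one and the same family of finite sets is simultaneously \emph{isolated} in the hyperspace of the larger ambient metric space $\pair{Y,\rho}$ and topologically \emph{dense} (modulo $\mathbf{0}$) in the hyperspace of the smaller one $\pair{X,\sigma}$, illustrating the metric-dependence of the Wijsman topology announced at the start of the introduction.
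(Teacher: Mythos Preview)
Your proposal is correct and follows exactly the approach intended by the paper: the corollary is stated without proof immediately after Theorem \ref{solutionisolated} and the remark that the Wijsman hyperspace of the $0$--$1$ metric is $2^{X}\setminus\{\mathbf{0}\}$, so the implicit argument is precisely to apply the theorem to the $0$--$1$ metric on a set of size $\kappa$, as you do. Your added verification that the nonzero finitely-supported functions are dense in $2^{X}\setminus\{\mathbf{0}\}$ is routine and correct.
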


If we start with the $0-1$ metric and apply Theorem \ref{solutionisolated}, we obtain a metric $\rho\sp\ast$. By the proof of Theorem \ref{solutionisolated}, we can construct $\rho\sp\ast$ with range exactly $\{1\}\cup\{1+\frac{1}{n+1}:n<\omega\}$. Notice further that $\rho\sp\ast$ is \emph{not} an ultrametric because there are triangles with all their sides of different length. We will next see that this metric $\rho\sp\ast$ is, in some sense, the most simple metric that can be obtained. First let's see that we cannot obtain all finite subsets isolated when working with finite metrics.

\begin{lemma}\label{isolatedfiniteslemma}
If $\pair{X,\rho}$ is a metric space and $F\in[X]\sp{<\omega}\setminus\{\emptyset,X\}$ is isolated in $\pair{CL(X),\wijs{\rho}}$, then there are $y_0,\ldots,y_m\in X$ and $\epsilon_0,\ldots,\epsilon_m\in(0,\infty)$ such that $X\setminus\bigcup\{B_\rho(y_i,\epsilon_i):i\leq m\}=F$.
\end{lemma}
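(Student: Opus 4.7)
The plan is to use the fact that $\{F\}$ being open means there is a basic Wijsman neighborhood of $F$ of the form
$$
\mathcal{W} = \bigcap_{i\in I\sp+}\mathcal{U}\sp+(x_i,\epsilon_i)\cap\bigcap_{j\in I\sp-}\mathcal{U}\sp-(x_j,\epsilon_j),
$$
with $\mathcal{W}\cap CL(X)=\{F\}$. First I would note that since $F\in\mathcal{W}$, we have $\rho(F,x_i)>\epsilon_i$ for each $i\in I\sp+$ and $\rho(F,x_j)<\epsilon_j$ for each $j\in I\sp-$; in particular, $\rho(F,x_i)>0$ for every $i\in I\sp+$.

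The heart of the argument is the following observation: for every $y\in X\setminus F$ there exists $i\in I\sp+$ with $\rho(y,x_i)\leq\epsilon_i$. Indeed, assume not; then consider $A=F\cup\{y\}$, which is a finite, hence closed, proper superset of $F$. For each $i\in I\sp+$,
$$
\rho(A,x_i)=\min\{\rho(F,x_i),\rho(y,x_i)\}>\epsilon_i,
$$
while for $j\in I\sp-$, $\rho(A,x_j)\leq\rho(F,x_j)<\epsilon_j$. Thus $A\in\mathcal{W}\setminus\{F\}$, contradicting the isolation of $F$. (Note $I\sp+\neq\emptyset$, since otherwise any set containing $F$ would also lie in $\mathcal{W}$, and $F\neq X$ provides such a set.)

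With this observation in hand, I would enumerate $I\sp+=\{0,\ldots,m\}$ and for each $i\leq m$ set $y_i=x_i$ and $\delta_i=\rho(F,x_i)>\epsilon_i$. The claim is $X\setminus\bigcup_{i\leq m}B_\rho(y_i,\delta_i)=F$. The inclusion $F\subset X\setminus\bigcup_{i\leq m}B_\rho(y_i,\delta_i)$ follows because any $f\in F$ satisfies $\rho(f,x_i)\geq\rho(F,x_i)=\delta_i$, so $f\notin B_\rho(y_i,\delta_i)$. The reverse inclusion is exactly the observation above: any $y\in X\setminus F$ satisfies $\rho(y,x_i)\leq\epsilon_i<\delta_i$ for some $i\leq m$, hence $y\in B_\rho(y_i,\delta_i)$.

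No step should present a serious obstacle; the only subtlety is that the contradiction argument produces a point in the \emph{closed} ball $D_\rho(x_i,\epsilon_i)$, whereas the conclusion asks for open balls, which I handle by slightly enlarging the radius from $\epsilon_i$ to $\rho(F,x_i)$, a strictly larger number still small enough to keep $F$ outside the open ball.
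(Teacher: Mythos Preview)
Your proof is correct and follows essentially the same approach as the paper: both take a basic Wijsman neighborhood isolating $F$, use the centers of the ``$+$'' subbasic conditions as the $y_i$, set the radii equal to $\rho(F,y_i)$, and obtain the desired equality by showing that $F\cup\{z\}$ would lie in the neighborhood whenever $z$ avoids all the balls. Your version is slightly cleaner in that you work directly with the $\U^{+}/\U^{-}$ subbase rather than the $\U(x,\alpha,\beta)$ subbase, which lets you avoid the paper's step of first absorbing the points of $F$ among the $x_i$ and then handling separately the case where \emph{only} points of $F$ appear; in your formulation that case collapses to the observation $I^{+}\neq\emptyset$.
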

\begin{proof}
Notice that $X$ embeds in $CL(X)$ with the function $x\mapsto\{x\}$ so every point of $F$ is isolated in $X$. Since $F$ is isolated, there is a basic open neighborhood of $F$ of the form $U=\bigcap\{\U(x_i,\alpha_i,\beta_i):i\leq k\}$, where $k<\omega$, $F\subseteq\{x_i:i<k\}$ and such that $U=\{F\}$. We may assume that there is $r\leq k$ such that $F=\{x_i:i<k\}$. 
Notice that $X$ embeds in $CL(X)$ with the function $x\mapsto\{x\}$ so every point of $F$ is isolated in $X$. So for each $i<r$, 
Let $0<\gamma_i<\beta_i$ be such that $B_\rho(x_i,\gamma_i)=\{x_i\}$. Define $V=\bigcap\{\U\sp-(x_i,\gamma_i):i<r\}$.

First consider the case that $r=k$.  Then $F\in V\subset U$ so in fact $V=\{F\}$. Notice that if $z\in X$, then by the definition of $V$, $F\cup\{z\}\in V$ so it follows that $z\in F$. This means that $X=F$ which is a contradiction to our hypothesis.

So $r<k$, let $m=k-r$ and $y_i=x_{r+i}$ for each $i< m$. For each $i< m$, let $\epsilon_i=\rho(F,y_i)>0$. So we just have to prove that
$$
X\setminus\bigcup\{B_\rho(y_i,\epsilon_i):i< m\}=F.
$$

By the choice of $\epsilon_i$ for $i< m$ we have that the right side of the equation is contained in the left side. Then let $z\in X\setminus\bigcup\{B_\rho(y_i,\epsilon_i):i\leq m\}$, we will prove that $F\cup\{z\}\in U$. 

If $i<r$, then clearly $\rho(F\cup\{z\},x_i)=0\in(\alpha_i,\beta_i)$ so $F\cup\{z\}\in\U(x_i,\alpha_i,\beta_i)$. If $r\leq i< k$, notice that $\rho(z,x_i)\geq\epsilon_i$. However since $\rho(F,x_i)=\epsilon_i$ and $F$ is finite, this distance is attained at some point of $F$. This means that $\rho(p,x_i)=\min\{\rho(y,x_i):y\in F\}=\epsilon_i$ for some $p\in F$. So then $\rho(F\cup\{z\})=\min\{\rho(y,x_i):y\in F\cup\{z\}\}=\epsilon_i$ because $F\cup\{z\}$ is also finite. So $F\cup\{z\}\in\U(x_i,\alpha_i,\beta_i)$ in this case as well.

Thus $F\cup\{z\}\in U$ and $U=\{F\}$ so $z\in F$. This proves the other inclusion and we have finished the proof.
\end{proof}

\begin{propo}\label{decreasingsequence}
If $\pair{X,\rho}$ is a discrete metric space such that $\{\rho(x,y):x,y\in X\}$ contains no infinite strictly decreasing sequences and every finite subset of $X$ is isolated in $\pair{CL(X),\wijs{\rho}}$, then $X$ is finite.
\end{propo}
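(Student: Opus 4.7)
The plan is to argue by contradiction. Suppose $X$ is infinite; I will produce an infinite strictly decreasing sequence in the distance set $D:=\{\rho(x,y):x,y\in X\}$, contradicting the hypothesis. As a preliminary observation, some open ball $B_\rho(x,d)\cap X$ must be infinite, for otherwise applying Lemma \ref{isolatedfiniteslemma} to any singleton $\{x_0\}$ (isolated by hypothesis) would express $X\setminus\{x_0\}$ as a finite union of finite open balls, forcing $X$ finite. Using the well-orderedness of $D$, I then define
\[
d^* := \min\bigl\{d\in D:d>0 \text{ and } B_\rho(x,d)\cap X \text{ is infinite for some } x\in X\bigr\}
\]
and fix $x_0\in X$ with $B_\rho(x_0,d^*)\cap X$ infinite. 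The key consequence of the minimality of $d^*$ is: whenever $B_\rho(y,r)\cap X$ is infinite, necessarily $r\geq d^*$.

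Next I would recursively construct points $y_1,y_2,\ldots\in X$, radii $r_1,r_2,\ldots\in D$, and a decreasing chain $A_1\supseteq A_2\supseteq\cdots$ of infinite subsets of $X$, maintaining at stage $n$ the invariant $A_n\subseteq B_\rho(x_0,d^*)\cap\bigcap_{i\leq n}B_\rho(y_i,r_i)$. At stage $n+1$, I apply Lemma \ref{isolatedfiniteslemma} to the isolated finite set $F_n:=\{x_0,y_1,\ldots,y_n\}$, obtaining a finite decomposition $X\setminus F_n=\bigcup_j B_\rho(z_j,\rho(F_n,z_j))$; pigeonholing inside $A_n\setminus F_n$ yields one such ball $B_\rho(y_{n+1},r_{n+1})$ whose intersection with $A_n$, taken as $A_{n+1}$, is infinite. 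By the minimality of $d^*$, each $r_{n+1}\geq d^*$, and the triangle inequality applied to any $a\in A_{n+1}$ yields $r_{n+1}\leq\rho(y_i,y_{n+1})<r_i+r_{n+1}$ for every $i\leq n$.

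The heart of the argument, and the main obstacle, will be showing that the construction can be arranged so that the radii $r_n$ contain an infinite strictly decreasing sub-sequence in $D$. One natural route is to note that whenever $r_{n+1}\geq r_n$, the centre $y_{n+1}$ must lie outside $B_\rho(y_n,r_n)$, yet the ball $B_\rho(y_{n+1},r_{n+1})$ still captures infinitely many points of $A_n\subseteq B_\rho(y_n,r_n)$; exploiting this geometry, for instance by picking an auxiliary point $a\in A_n$ and re-applying Lemma \ref{isolatedfiniteslemma} to the enlarged isolated set $F_n\cup\{a\}$, one should be able to manufacture an infinite ball of radius strictly less than $d^*$, contradicting the minimality of $d^*$ and hence forcing a strict decrease at infinitely many stages. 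The delicate bookkeeping in this final step --- the correct choice of auxiliary points and applications of the Lemma that makes the strict decrease actually happen --- is where I expect the real work of the proof to lie. Once an infinite strictly decreasing sub-sequence of $(r_n)$ is in hand, it is an infinite strictly decreasing sequence in $D$, contradicting the standing hypothesis and completing the proof.
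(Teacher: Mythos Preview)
Your proposal has a genuine gap at exactly the place you flag as ``the real work.'' The problem is structural: by pigeonholing you retain only the \emph{centre} $y_{n+1}$ of one ball meeting $A_n$, but nothing forces $y_{n+1}$ itself to lie in $A_n$ or even in $B_\rho(y_n,r_n)$. Consequently you get no useful upper bound on $\rho(y_n,y_{n+1})$, and there is no reason for the radii $r_n=\rho(F_n,y_{n+1})$ to decrease; the inequality $r_{n+1}\le\rho(y_i,y_{n+1})<r_i+r_{n+1}$ you record is vacuous in this direction. The sketched rescue (``re-apply the Lemma to $F_n\cup\{a\}$ to manufacture an infinite ball of radius $<d^\ast$'') does not work either: the new centres produced by the Lemma are again unconstrained, and in a general metric space two balls of radius $d^\ast$ whose centres are $\ge d^\ast$ apart can still share infinitely many points, so no contradiction with the minimality of $d^\ast$ emerges.

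The paper's argument avoids this by keeping \emph{all} the balls at each stage and organising their centres into a finitely branching tree: writing $X\setminus F_k=\bigcup_j B_\rho(z_j,\delta(z_j))$ and setting $F_{k+1}=F_k\cup\{z_j\}$, each new centre $x\in F_{k+1}\setminus F_k$ lies in $X\setminus F_{k-1}$ and hence in some ball $B_\rho(p(x),\delta(p(x)))$ with $p(x)\in F_k\setminus F_{k-1}$. K\"onig's Lemma then yields an infinite branch $(x_k)$ with $x_{k+1}\in B_\rho(x_k,\delta(x_k))$ but $x_{k-1}\notin B_\rho(x_k,\delta(x_k))$ (since $x_{k-1}\in F_{k-1}$), giving $\rho(x_{k+1},x_k)<\delta(x_k)\le\rho(x_k,x_{k-1})$ immediately. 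The point is not to control the radii but to look at \emph{consecutive distances along a branch}; the tree structure, not a minimality argument on $d^\ast$, is what makes the strict decrease automatic.
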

\begin{proof}
Aiming towards a contradiction, let us assume that $X$ is infinite. We will recursively construct $\{F_i:i<\omega\}\subset[X]\sp{<\omega}$ as follows: Let $p\in X$ be chosen arbitarily and define $F_0=\{p\}$. By Lemma \ref{isolatedfiniteslemma} applied to $F_0$, it easily follows that $\{\rho(p,x):x\in X\}$ is bounded, let $\delta(p)>0$ be any bound.

Now assume that we have constructed $F_0\subset\ldots\subset F_k$. By Lemma \ref{isolatedfiniteslemma} there are $y_0,\ldots,y_m\in X$ and $\delta(y_i)$ for each $i\leq m\in(0,\infty)$ such that $X\setminus\bigcup\{B_\rho(y_i,\delta(y_i)):i\leq m\}=F_k$. Let $F_{k+1}=\{y_0,\ldots,y_m\}\cup F_k$.

This completes the construction of the $F_k$. Now define an ordering on $\bigcup\{F_k:k\in \omega\}$ as follows. For each $k<\omega$ and each $x\in F_{k+1}\setminus F_k$, choose $p(x)\in F_{k}\setminus F_{k-1}$ such that $x\in B_\rho(p(x),\delta(p(x)))$. Since the balls chosen around the points of $F_{k}\setminus F_{k-1}$ cover $X\setminus F_{k-1}$, there is such a $p(x)$. Define a tree ordering $\lhd$ on $\bigcup \{F_k:k\in \omega\}$ so that $F_k\setminus F_{k-1}$ is the $k^{\text th}$ level of the tree, and for each $y\in F_k\setminus F_{k-1}$ the set of successors of $y$ is $\{x\in F_{k+1}\setminus F_k: y=p(x)\}$. 

This is an infinite tree with finite levels so by K\"onig's Lemma (see \cite[Lemma III.5.6]{kunen-set-theory-2011}) it has an infinite branch $\{x_k:k<\omega\}$ with $p=x_0$ and $x_k\lhd x_{k+1}$ for $k<\omega$. Notice that for each $0<k<\omega$, we have that $x_{k-1}\not\in B(x_k,\delta(x_k))$ but, on the other hand $x_{k+1}\in B(x_{k},\delta(x_{k}))$. Therefore, if $0<k<\omega$, $\rho(x_{k+1},x_k)<\rho(x_k,x_{k-1})$, contradicting our assumptions on $\rho$. Thus, $X$ is finite.
\end{proof}

\begin{coro}
If $\pair{X,\rho}$ is a metric space with $\rho$ finite-valued and every finite subset of $X$ is isolated in $\pair{CL(X),\wijs{\rho}}$, then $X$ is finite.
\end{coro}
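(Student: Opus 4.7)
The plan is to derive this corollary directly from Proposition \ref{decreasingsequence}. That proposition has three hypotheses on $\pair{X,\rho}$: discreteness, the absence of an infinite strictly decreasing sequence in the set of distances $\{\rho(x,y):x,y\in X\}$, and the isolation of every finite subset in the Wijsman hyperspace. The third hypothesis is assumed outright, so I only need to verify the first two.

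For discreteness, I would recall the observation already used in the proof of Lemma \ref{isolatedfiniteslemma}: the map $x\mapsto\{x\}$ is a topological embedding of $X$ into $\pair{CL(X),\wijs{\rho}}$. Since every singleton of $X$ is a finite non-empty subset, and by hypothesis such subsets are isolated in $CL(X)$, pulling back through this embedding shows that every point of $X$ is isolated in $X$. Thus $X$ is discrete.

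The second hypothesis is even more immediate: since $\rho$ is finite-valued, the set $\{\rho(x,y):x,y\in X\}$ is a finite subset of $[0,\infty)$, and a finite set of reals cannot contain an infinite strictly decreasing sequence. Applying Proposition \ref{decreasingsequence} then yields that $X$ is finite, completing the proof. I do not anticipate a substantive obstacle here; the content of the result is entirely absorbed into Proposition \ref{decreasingsequence}, and the corollary is only making explicit that \emph{finite-valued} is a stronger assumption than \emph{no infinite strictly decreasing chain of values}.
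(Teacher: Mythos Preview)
Your proposal is correct and matches the paper's intent: the corollary is stated without proof immediately after Proposition~\ref{decreasingsequence}, so the paper treats it as an immediate consequence, and your argument simply spells out the two easy verifications (discreteness via the singleton embedding, and the finite range ruling out infinite decreasing chains) needed to invoke that proposition.
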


So in fact Proposition \ref{decreasingsequence} does not only rule out finite metrics but also metrics that do not contain decreasing sequences. So in this sense $\rho\sp\ast$ is the most simple such metric.

The next result rules out ultrametrics in uncountable sets from the possible spaces in which finite subsets are isolated in the Wijsman hyperspace. We are motivated to consider ultrametrics by the paper \cite{bertacchi-costantini} in which the authors study disconnectedness properties of ultrametric spaces.

\begin{propo}\label{ultraisctble}
Let $\pair{X,\rho}$ be an ultrametric space such that every non-empty finite subset of $X$ is isolated in $\pair{CL(X),\wijs{\rho}}$. Then $X$ is countable.
\end{propo}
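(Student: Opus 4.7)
The plan is to organize the open balls of $X$ into a rooted tree under reverse inclusion and show, using the hypothesis, that this tree is finitely branching and every node sits at finite depth from the root, forcing the tree (and hence $X$) to be countable.

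First I would note that, since each singleton is isolated in $\pair{CL(X),\wijs{\rho}}$ and the map $x\mapsto\{x\}$ embeds $X$ into this hyperspace, every $p\in X$ is isolated in $X$; in particular $\{p\}$ is itself an open ball. Assuming $|X|\geq 2$, Lemma \ref{isolatedfiniteslemma} applied to $F=\{p\}$ yields a decomposition $X\setminus\{p\}=\bigcup_{i\leq m}B_\rho(y_i,\epsilon_i)$, which by Lemma \ref{ultralemma}(b) can be refined to a finite pairwise disjoint partition into open balls. A short computation with the ultrametric inequality shows that every point of $B_\rho(y_i,\epsilon_i)$ has distance exactly $\rho(p,y_i)$ from $p$, so the set $\{\rho(p,x):x\in X\}$ is finite for each $p\in X$.

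Let $T$ be the family of all open balls of $X$ together with $X$ itself, ordered by reverse inclusion; by Lemma \ref{ultralemma}(b) this is a tree with root $X$. I claim two bounds. For the \emph{depth bound}: every $B\in T$ has only finitely many ancestors, because fixing any $p\in B$, any ball containing $p$ is of the form $B_\rho(p,r)$ for some $r>0$ (a standard ultrametric fact), and such balls take only finitely many distinct values by the previous paragraph; so $B$ lies at finite level below $X$. For the \emph{branching bound}: every $B$ has only finitely many children (maximal proper subballs), because for any $p\in B$ the disjoint decomposition of $X\setminus\{p\}$ restricts to a finite partition of $B\setminus\{p\}$ (each piece either avoids $B$ or is contained in $B$, as it cannot contain $B$), and ultrametric comparability forces every maximal proper subball of $B$ to be uniquely determined by which piece (or $\{p\}$) it contains. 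Combining the two bounds, each level $T_n$ is finite by induction, so $T=\bigcup_{n<\omega}T_n$ is countable; since $p\mapsto\{p\}$ injects $X$ into $T$, $X$ is countable.

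The most delicate point is the branching bound: one has to check that the finitely many pieces in the decomposition of $B\setminus\{p\}$ really do account for every maximal proper subball of $B$. This is where the depth bound feeds back in — it guarantees a unique maximal proper subball of $B$ containing $p$ (the penultimate term of the finite chain of ancestors of $\{p\}$ below $B$), so that the remaining maximal proper subballs correspond bijectively to the pieces into which $B\setminus\{p\}$ splits.
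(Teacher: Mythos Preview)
Your argument is correct, and although it shares the same skeleton as the paper's proof (build a tree of open balls and use the finite decompositions coming from Lemma~\ref{isolatedfiniteslemma} to control branching), the execution is genuinely different. The paper constructs its tree \emph{recursively}: pick a center $x$, split $X\setminus\{x\}$ into finitely many balls, keep those lying inside the current ball as children, and repeat; it then argues by contradiction that every point of $X$ eventually occurs as a center, since a missed point $p$ would yield an infinite branch, and the decomposition of $X\setminus\{p\}$ would then force infinitely many of the branch balls into finitely many disjoint pieces. Your route is more structural: you first extract the fact that $\{\rho(p,x):x\in X\}$ is finite for each $p$ (a consequence the paper never isolates), which immediately bounds the height of the canonical tree of \emph{all} balls and lets you conclude countability directly, with no contradiction step. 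What you gain is a cleaner invariant and a canonical tree; what the paper's version gains is that it never needs to analyze ``maximal proper subballs'' at all---its children are simply the pieces handed to it by Lemma~\ref{isolatedfiniteslemma}.

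One small wording issue: in your last paragraph you say the remaining maximal proper subballs of $B$ ``correspond bijectively'' to the pieces of the decomposition of $B\setminus\{p\}$. That is slightly too strong---a single maximal proper subball may swallow several of the $B_i$'s---but what you actually need (and what your earlier sentence states correctly) is only the \emph{injection} from children of $B$ into $\{\{p\}\}\cup\{B_i:B_i\subset B\}$, and that holds because distinct children are disjoint. This does not affect the validity of the proof.
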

\begin{proof}
By Lemmas \ref{ultralemma} and \ref{isolatedfiniteslemma}, the following property is easy to see.
\vskip6pt
\noindent$(\ast)$ If $x\in X$, there are pairwise disjoint $B_0,\ldots, B_m\in\B$ with $X\setminus\{x\}=\bigcup\{B_i:i\leq m\}$.
\vskip6pt
By recursion, we will construct a tree $\pair{T,\lhd}$ whose elements are pairs $\pair{x,r}$, where $x\in X$ and $r\in(0,\infty)$. Start choosing any $x_0\in X$, by property $(\ast)$ it easily follows that $\rho$ is bounded so there is $r_0\in(0,\infty)$ such that $X=B_\rho(x_0,r_0)$; then $\pair{x_0,r_0}$ is the smallest element of $T$. 

Assume that $\pair{x,r}\in T$, let us construct the immediate successors of this element. By property $(\ast)$, there are clopen balls $B_0,\ldots,B_m\in\B$ such that $X\setminus\{x\}=\bigcup\{B_i:i\leq m\}$. By Lemma \ref{ultralemma}, some of these clopen balls are contained in $B_\rho(x,r)$ and the rest miss it. If $B_\rho(x,r)\cap B_i=\emptyset$ for all $i\leq m$, then we stop the construction, so $\pair{x,r}$ has no successors. Otherwise, we may assume that $B_i\subset B_\rho(x,r)$ if and only if $i\leq n$ for some $n\leq m$. Let $B_i=B_\rho(x_i,r_i)$ for $i\leq n$. Then the immediate successors of $\pair{x,r}$ are exactly $\{\pair{x_i,r_i}:i\leq n\}$.

Our tree will only have levels at stages $<\omega$ so what we have said is enough to define the tree. By the construction, it is easy to see that the following properties hold.
\begin{itemize}
\item[(i)] Every node of $T$ has finitely many successors.
\item[(ii)] If $\pair{x,r},\pair{y,s}\in T$ and $\pair{x,r}\lhd\pair{y,s}$, then $s<r$ and $B_\rho(y,s)\subset B_\rho(x,r)$.
\end{itemize}

We would like to prove that $X=\{x:\exists r\in(0,\infty)\ (\pair{x,r}\in T)\}$, this would clearly prove that $X$ is countable. We can recursively define $T_0=\{\pair{x_0,r_0}\}$ and $T_{n+1}$ to be the set of all immediate successors of elements of $T_n$, for each $n<\omega$. Then by induction on $n<\omega$, it is easy to see that
$$
X=\{x:\exists m<n, r\in(0,\infty)\ (\pair{x,r}\in T_m)\}\cup(\bigcup\{B_\rho(x,r):\pair{x,r}\in T_n\})
$$
where $\{x:\exists m<n, r\ (\pair{x,r}\in T_m)\}$ is a finite (empty if $n=0$) set and  $\{B_\rho(x,r):\pair{x,r}\in T_n\}$ are pairwise disjoint.

Thus, if there is $p\in X\setminus\{x:\exists r\in(0,\infty)\ (\pair{x,r}\in T)\}$, then there is a branch $\{\pair{x_i,r_i}:i<\omega\}\subset T$ such that $\pair{x_i,r_i}\lhd\pair{x_{i+1},r_{i+1}}$ for all $i<\omega$. By property $(\ast)$, $X\setminus\{p\}=\bigcup\{C_0,\ldots,C_k\}$, where $k<\omega$ and $\{C_0,\ldots,C_k\}\subset \B$ are pairwise disjoint.

For each $i<\omega$ there is $s(i)\leq k$ such that $x_i\in C_{s(i)}$. If $i<j<\omega$, then by Lemma \ref{ultralemma} and the facts that $p\in B_\rho(x_j,r_j)\setminus C_{s(i)}$ and $x_i\in C_{s(i)}\setminus B_\rho(x_j,r_j)$, the set $C_{s(i)}$ must be disjoint from $B_\rho(x_j,r_j)$ so $s(i)\neq s(j)$. But then we are constructing an injective function $i\mapsto s(i)$ from $\omega$ to $k$, this is a contradiction.

Thus, such situation is impossible. This implies that $X=\{x:\exists r\in(0,\infty)\ (\pair{x,r}\in T)\}$ so we have finished the proof.
\end{proof}

However, the countable Example 2.3 in \cite{cao-jun-moors} can be essentially replaced by an ultrametric space.

\begin{ex}
There is a countable infinite ultrametric space $\pair{X,\rho}$ such that every finite subset of $X$ is isolated in $\pair{CL(X),\wijs{\rho}}$.
\end{ex}
\begin{ex2}
Let $X=\omega$ and
$$
\rho(m,n)=\left\{
\begin{array}{ll}
1+1/(\min\{m,n\}+1),&\textrm{ if }m\neq n\\
0,&\textrm{ if }m=n.
\end{array}\right.
$$
It easily follows that $\pair{X,\rho}$ is a $1$-discrete ultrametric space. Let $F\in[X]\sp{<\omega}\setminus\{\emptyset\}$, we next prove that $\{F\}$ is an open set. Let $z=(\max{F})+1$, $F=\{x_0,\ldots,x_m\}$ and $(z+1)\setminus F=\{y_0,\ldots,y_n\}$. Then
\begin{eqnarray}
\{F\}&=&\{A\in CL(X):\rho(x_i,A)<1 \textrm{ for }i\leq m, \rho(y_j,A)>1 \textrm{ for }j\leq n,\nonumber\\ & & \textrm{ and }\rho(z,A)>1+1/{(z+1)}\}\nonumber
\end{eqnarray}
is an open set.
\end{ex2}

We finally consider Question 3.3 of \cite{cao-jun-moors}. In that question, the authors ask whether discrete metric spaces have $0$-dimensional Wijsman hyperspaces. In \cite{bertacchi-costantini} there is an example of a discrete ultrametric space whose Wijsman hyperspace is not $0$-dimensional. In fact, we can say a little more.

\begin{ex}
For each cardinal $\omega\leq\kappa\leq\cont$, there is a uniformly discrete ultrametric space of cardinality $\kappa$ with its Wijsman hyperspace not $0$-dimensional.
\end{ex}
\begin{ex2}
Let $X\subset[1,2]$ be a dense subspace of cardinality $\cont$ and let $\rho$ be the metric defined as $\rho(x,y)=\max{\{x,y\}}$ whenever $x\neq y$. Then $\pair{X,\rho}$ is discrete and ultrametric. That $\pair{CL(X),\wijs{\rho}}$ is not $0$-dimensional follows from Theorem 19 of \cite{bertacchi-costantini}.
\end{ex2}


\begin{thebibliography}{99}

\bibitem{beer} Beer, G., ``Topologies on closed and closed convex sets.'' Mathematics and its Applications, 268. Kluwer Academic Publishers Group, Dordrecht, 1993. xii+340 pp. ISBN: 0-7923-2531-1 

\bibitem{bertacchi-costantini} Bertacchi, D.; Costantini, C.; ``Existence of selections and disconnectedness properties for the hyperspace of an ultrametric space.'' Topology Appl. 88 (1998), no. 3, 179--197.

\bibitem{cao-jun-hernorm} Cao, J.; Junnila, H. J. K.; ``Hereditarily normal Wijsman hyperspaces are metrizable.'' preprint

\bibitem{cao-jun-moors} Cao, J.; Junnila, H. J. K.; Moors, W. B.; ``Wijsman hyperspaces: subspaces and embeddings.'', Topology Appl. 159 (2012), no. 6, 1620--1624.

\bibitem{chaber-pol} Chaber, J.; Pol, R.; ``Note on the Wijsman hyperspaces of completely metrizable spaces.'' Boll. Unione Mat. Ital. Sez. B Artic. Ric. Mat. (8) 5 (2002), no. 3, 827--832.

\bibitem{dimaio-meccariello} Di Maio, G.; Meccariello, E.; ``Wjisman topology.'' Recent progress in function spaces, 55--91, Quad. Mat., 3, Dept. Math., Seconda Univ. Napoli, Caserta, 1998. 

\bibitem{eng} Engelking, Ryszard; ``General topology.'' Translated from the Polish by the author. Second edition. Sigma Series in Pure Mathematics, 6. Heldermann Verlag, Berlin, 1989. viii+529 pp. ISBN: 3-88538-006-4

\bibitem{kunen-set-theory-2011} Kunen, Kenneth; ``Set theory.'' Studies in Logic (London), 34. College Publications, London, 2011. viii+401 pp. ISBN: 978-1-84890-050-9

\bibitem{velicko} Veli\v cko, Nikolay V.; ``The space of closed subsets.'' Sibirsk. Mat. \v Z. 16 (1975), no. 3, 627--629, 646.

\end{thebibliography}
\end{document}